\titleformat{\section}{\normalfont\fontsize{12.5}{10}\bfseries}{\thesection}{0.5em}{}
\titleformat{\subsection}{\normalfont\fontsize{11.5}{17}\bfseries}{\thesubsection}{0.5em}{}
\newtheorem{thm}{Theorem}[section]
\newtheorem{corr}[thm]{Corollary}
\newtheorem{lemma}[thm]{Lemma}
\newtheorem{prop}[thm]{Proposition}
\theoremstyle{definition}
\newtheorem{defn}[thm]{Definition}
\newtheorem{rem}[thm]{Remark}
\numberwithin{equation}{section}
\def\R{\mathbb R}
\def\Re{\rm{Re} }
\def\Im{\rm{Im}}
\def\SU{\rm{SU}}
\def\Z{\mathbb Z}
\def\so{\mathfrak{so}}
\def\del{\nabla}
\def\G2{\mathrm{G}_2}
\def\g2{\varphi}
\def\su{\mathfrak{su}}
\def\SO{\rm{SO}}
\def\id{\textup{id}}
\def\lieg2{\mathfrak{g}_2}
\DeclareMathOperator\vol{vol}
\DeclareMathOperator\tr{tr}
\DeclareFontFamily{U}{MnSymbolC}{}
\DeclareSymbolFont{MnSyC}{U}{MnSymbolC}{m}{n}
\DeclareFontShape{U}{MnSymbolC}{m}{n}{
    <-6>  MnSymbolC5
   <6-7>  MnSymbolC6
   <7-8>  MnSymbolC7
   <8-9>  MnSymbolC8
   <9-10> MnSymbolC9
  <10-12> MnSymbolC10
  <12->   MnSymbolC12}{}
\DeclareMathSymbol{\intprod}{\mathbin}{MnSyC}{'270}
\begin{document}

\title{Nearly half-flat \rm{SU}(3) structures on $S^3\times S^3$}
\author{Ragini Singhal}
\date{}
\maketitle

\textbf{Abstract}
We study the $\SU(3)$-structure induced on an oriented hypersurface of a 7-dimensional manifold with a nearly parallel $\G2$-structure. Such $\rm{SU}(3)$-structures are called \textit{nearly half-flat}. 
We characterise the left invariant nearly half-flat structures on $S^3\times S^3$.  This characterisation then helps us to systematically analyse nearly parallel $\G2$-structures on an interval times $ S^3\times S^3$. 

\tableofcontents{}
\section{Introduction}

A $\G2$-structure on a $7$-dimensional manifold $M$ is defined by a non-degenerate 3-form $\g2$ that induces a metric $g_\g2$, a cross-product $\times_\g2$, an orientation $\vol_\g2$ and thus a Hodge star $*_\g2$ on $M$ (see \cite{bryantrmks}). The Riemannian manifold $M$ with a $\G2$-structure $\g2$ is called nearly $\G2$ if $\g2$ is a nearly parallel $\G2$-structure that is, for some $\lambda\neq 0$ 
\begin{align}\label{eq:ng2-intro}
    d\g2&=\lambda *_\g2 \g2.
\end{align}
They were described as manifolds with weak holonomy $\G2$ by Gray in \cite{Gray1971} and are positive Einstein \cite{friedkath}. The cone over a nearly parallel $\G2$-manifold has holonomy contained in $\textup{Spin}(7)$. Manifolds with nearly parallel $\G2$-structure have been studied for various reasons in mathematics as well as physics: in addition to the papers described in the introduction, see also \cites{BILAL2002112,BILAL2003343, AgrFried, deformg2,Gemmer2011YangMillsIO,Kawai2016SecondOrderDO,gonball,Ball-Madnick, dwivedi2020deformation,podesta_2021, singhal-instanton,freund-supergravity, Acharya:2003ii, BEHRNDT200499}.

So far the only known examples of nearly $\G2$-manifolds are homogeneous \cite{friedkath} or they come from 3-Sasakian geometry \cites{Galicki2,wilking-3sasaki}. Thus, in order to find new examples of nearly parallel $\G2$-manifolds one has to look for non-homogeneous examples. Finding an inhomogeneous Einstein metric coming from the nearly parallel $\G2$-structure is quite  challenging since the field equations for an appropriate metric ansatz are highly nonlinear partial differential equations. As a first step towards this goal, it would be natural to look for  cohomogeneity-one examples of nearly parallel $\G2$-manifolds, that is, nearly parallel $\G2$-structures that has a Lie group $G$ action preserving $\g2$ and the generic orbits have dimension $7-1=6$. These $6$-dimensional orbits then carry an invariant $\SU(3)$-structure defined by a 2-form $\omega$ and a $3$-form $\gamma$. In this article we describe the invariant $\SU(3)$-structure $(\omega,\gamma)$ on the six-dimensional oriented hypersurface $M^6$ of a nearly parallel $\G2$-manifold $N^7$. The condition that the $\G2$-structure $\g2=dt\wedge\omega+\gamma$ on $I\times M$ satisfies \eqref{eq:ng2-intro} imposes some conditions on the intrinsic torsion of the $\SU(3)$-structure $(\omega,\gamma)$ on $M$. The $\SU(3)$-structure $(\omega,\gamma)$ thus obtained is  known as \textit{nearly half-flat} and is defined by the condition
\begin{align*}
    d\gamma=\frac{\lambda}{2}\omega^2
\end{align*} where $\lambda$ is the non-zero constant in \eqref{eq:ng2-intro}.

The term nearly half-flat originates from the analogous situation where the $\SU(3)$-structure on the hypersurface of a torsion-free $\G2$-manifold is referred as \textit{half-flat}. Nearly half-flat $\SU(3)$-structures were first introduced in \cite{nearlyhypo-nhf} in the context of
evolution equations on six-manifolds $M$ leading to nearly parallel $\G2$-structures on the product of $M$ and an interval. In \cites{cvlt-liftnhf,liftingsu3tonhf,Fabian-thesis,Conti-embedding} the authors showed that one can construct nearly parallel $\G2$-structures by lifting certain nearly half-flat structures. This result is analogous to Hitchin's result in \cite{Hitchin-stableforms} that half-flat 
$\SU(3)$-structures on a six-dimensional manifold $M$ can be lifted to parallel $\G2$-structure on the product $M\times\R$ under certain conditions. In fact the nearly half-flat is a slight generalisation of the half-flat where the $3$-form $\gamma$ is not closed. Moreover if we impose $\gamma$ to be closed, the nearly half-flat structure becomes half-flat. This is the same as setting $\lambda=0$ in \eqref{eq:ng2-intro}.

In \cite{Cleyton-swann} the authors classified the connected Lie groups $G$ that can act as cohomogeneity-one on nearly parallel $\G2$-manifolds. In the case when $G$ is simple, no new solutions were found. One of the possible non-simple Lie groups that can act by cohomogeneity-one on nearly parallel $\G2$-manifolds is $\SU(2)^2$ up to finite quotients. In this case the generic orbit of a complete cohomogeneity-one nearly $\G2$-manifold is isomorphic to $S^3\times S^3$ or its finite quotient. In this article we describe the $\SU(2)^2$-invariant nearly half-flat $\SU(3)$-structures on $S^3\times S^3$. In \cite{Madsen-Salamon} the authors describe the left-invariant half-flat $\SU(3)$-structures on $S^3\times S^3$ using the representation theory of $\SO(4)$ and matrix algebra. In the present article we follow a similar approach and show that we can describe any invariant nearly half-flat $\SU(3)$-structure in terms of two real $3\times 3$ matrices and two real constants satisfying some normalization and commutativity relations (see Theorem \ref{thm:matrixdescr}).

In \cite{Haskins-Lorenzo} the authors constructed the first complete examples of cohomogeneity-one nearly K\"ahler structures in six-dimensions. Hopefully the characterization of invariant nearly half-flat $\SU(3)$-structures on $S^3\times S^3$ can also be used to achieve something similar in the nearly parallel $\G2$-structure case.

The study of the hypersurfaces of $\R^7$ with its associated $\G2$ cross product was initiated by Calabi \cite{Calabi-hypersurface} , Gray \cite{Gray-hypersurface} and later extended to manifolds with $\G2$-structures \cites{Chiossi-Simon,CHIOSSI-Swann, conti-salamon,Madsen-Salamon, spinorial-agricola,  DWIVEDI2019253,Einstein-warpedG2}. The Weingarten map of an oriented hypersurface inside a manifold with a $\G2$-structure can be completely described in terms of the intrinsic torsion forms of the hypersurface. This relation was used in \cite{Cabrera-hypersurface} and \cite{spinorial-agricola} to describe the $\SU(3)$-structure on an oriented hypersurface of a manifold with a $\G2$-structure.

In \textsection\ref{section:setup} we give a brief introduction of $\SU(3)$-, $\G2$-structures on manifolds of dimension $6$, $7$ respectively. We define the respective intrinsic torsion forms and alienate the classes of our particular interests.   

In \textsection\ref{sec:evolution-eqns} we derive the evolution equations that describe the nearly $\G2$-structure on $M\times I$ evolving from a family of nearly half-flat $\SU(3)$-structures on $M$. We then use the evolution equations to describe the nearly half-flat condition in terms of the intrinsic torsion forms. 

In \textsection \ref{sec:setup_half_flat} we specialise the theory curated in the previous sections to $S^3\times S^3$ which constitutes the heart of this article. Using matrix algebra we describe the space of invariant  nearly half-flat $\SU(3)$-structures on $S^3\times S^3$ and show that it can be parameterized using two real $3\times 3$ matrices $P,Q$ and real constants $(a,b)$ satisfying some commutativity and normalization conditions. Further we use this parameterization to describe the moduli space of invariant nearly half-flat $\SU(3)$-structures on $S^3\times S^3$, in terms of which it turns out to be a finite-dimensional symplectic quotient. This
description is rather similar to the half-flat case as described in \cite{Madsen-Salamon}. The description of the invariant nearly half-flat $\SU(3)$-structure $(\omega,\gamma)$ in term of elementary matrices makes it rather elegant to construct $\SU(3)$-structures of specific torsion classes as we do in \textsection\ref{sec:setup_half_flat}. Using this terminology we are able to produce nearly half-flat structures with strictly positive scalar curvature as well as with zero scalar curvature.

In \textsection \ref{sec:evol-eqnsS3} we describe the evolution equations in the special case of $S^3\times S^3$ in terms of the parameterization. The equations we have are in contrast with the half-flat case as presented in \cite{Madsen-Salamon} as they are no longer Painlev\'e equations. In fact the constants $a$ and $b$ evolve with $t$ as opposed to the half-flat case (see \eqref{eqns:evolutionPQ}). We also represent some of the known examples of nearly parallel $\G2$-structures obtained from the invariant nearly half-flat $\SU(3)$-structure on $S^3\times S^3$ such as the homogeneous nearly parallel $\G2$-structure on the Berger space $\SO(5)/SO(3)$ and the sine cone metric on $S^1\times S^3\times S^3$. The two solutions we present here has extra $Z_2^2$ and $\SU(2)$-symmetry respectively. The algebraic setup introduced makes the description of these known nearly parallel $\G2$-structures much more elegant and efficient to use.

\vspace{0.7cm} 
\noindent
\textbf{Acknowledgements.} The author would like to thank Simon Salamon and Thomas Madsen for innumerable suggestions on the project.  The author would also like to thank Benoit Charbonneau for improving the manuscript, and Lorenzo Foscolo, Spiro Karigiannis and Shubham Dwivedi for helpful discussions. Thanks to the anonymous reviewer for many valuable suggestions. A special mention to Simons Collaboration on Special holonomy in geometry, analysis and physics as most of the work on this project was undertaken while the author was a Simons Collaboration postdoc at King's College London. 

\medskip

\section{$\SU(3)$ and $\G2$ structures}\label{section:setup}
We are interested in studying invariant $\SU(3)$-structures on $S^3\times S^3$. An $\SU(3)$-structure on a $6$-dimensional manifold $M$ is defined by a pair $\omega,\Omega$ where $\omega$ is a symplectic form and $\Omega$ is a complex $(3,0)$ form which satisfies the normalization condition 
\begin{align*}
    \Omega\wedge\bar{\Omega}&=-\frac{4i}{3}\omega^3.
\end{align*}
An $\rm{SU}(3)$ reduction defines a circle of real 3-forms $\Gamma:=\{\cos\theta  \Re\Omega+ \sin\theta \Im\Omega, \theta\in\R\}$, and any $\gamma\in\Gamma$ along with the $2$-form $\omega$ defines an almost complex structure $J$, the metric $g$, and the orientation $\vol_g$. Note that an $\SU(3)$-structure on a $6$-dimensional manifold can be defined by a pair $(\omega,\gamma)$ where $\gamma\in \Gamma$. indeed $\gamma$ can determine $J$ as well $J\gamma$ such that $\gamma+i J\gamma$ is a complex holomorphic volume form of type $(3,0)$.  See \cite{Hitchin-3forms} for more details.

Using $\omega$ one can define the symplectic Hodge star $\star:\Omega^rM\to\Omega^{6-r}M$ via the relation
\begin{align}\label{symp-hodgestar}
    \alpha\wedge\star\beta&=\omega(\alpha,\beta) \ \frac{\omega^3}{6}.
\end{align}
Using the above defined symplectic Hodge star for $p\in M$ we have $P_\gamma\in\rm{End}(T^*_pM)$ defined by 
\begin{align*}
    P_\gamma&\colon \alpha \mapsto -\frac{1}{2}\star(\gamma\wedge\star(\gamma\wedge\alpha).
\end{align*}
Then the endomorphism $ J_\gamma=(\det{P_\gamma})^{-\frac{1}{6}}P_\gamma$ defines an almost complex structure on $M$. We write $J$ instead of $J_\gamma$ when there is no scope for confusion. On any point $p\in M$ an almost complex structure $J_\gamma$ on $T_pM$ can be defined using any $\gamma\in\Gamma$ as described in \cite[Section 8.2]{Hitchin-stableforms}. We define $K_\gamma\in \rm{End}(T_pM)\otimes \Lambda_p^6M\cong \rm{End}(T_pM)$ by
\begin{align}\label{J-Hitchin}
    X\mapsto K(X):= (X\lrcorner \gamma)\wedge \gamma \in\Lambda^5_pM\cong T_pM\otimes \Lambda^6_pM.
\end{align} Then $J_\gamma=6K_\gamma/\omega^3$.

The natural action of the Lie group $\SU(3)$ on the tangent space of a $6$-dimensional manifold induces the following decomposition on the space of differential forms $\Omega^p$ into irreducible $\SU(3)$-representations $\Omega^p_k$ with pointwise dimension $k$ (see \cite{Ric-SU3}):
\begin{align}
\begin{split}
    \Omega^2&=\Omega^2_1\oplus\Omega^2_6\oplus\Omega^2_8,\label{form-decomposition}\\
    \Omega^3&=\Omega^3_{\Re}\oplus\Omega^3_{\Im}\oplus\Omega^3_6\oplus\Omega^3_{12},
\end{split}
\end{align} where each summand can be described in terms of the $\SU(3)$-structure as follows,
\begin{align*}
    \Omega^2_1&=\R \omega, \\
    \Omega^2_6&=\{\star(\alpha\wedge\gamma) \ | \  \alpha\in \Omega^1\}=\{\beta\in\Omega^2 \ | \  J\beta=-\beta\},\\
    \Omega^2_8&=\{\beta\in\Omega^2 \ | \  \beta\wedge\gamma=0, \star\beta=-\beta\wedge\omega\}\\
    &=\{\beta\in\Omega^2 \ | \    J\beta=\beta, \beta\wedge\omega^2=0\},
    \end{align*}
    and
    \begin{align*}
         \Omega^3_{\Re} &=\R\gamma,\\
          \Omega^3_{\Im} &=\R  J\gamma,\\
        \Omega^3_6&=\{\alpha\wedge\omega \ | \  \alpha\in\Omega^1\}=\{\xi\in\Omega^3 \ | \  \star\xi=\xi\},\\
        \Omega^3_{12}&=\{\xi\in\Omega^3 \ | \  \xi\wedge\omega=0,\xi\wedge\gamma=0,\xi\wedge  J\gamma=0\}.
    \end{align*}
The space of $1,6$-forms is irreducible and we can describe the space of $4,5$-forms via the isomorphism described by the Hodge star operator $*$.    
Using the decomposition in \eqref{form-decomposition} and the relations between $\omega,\gamma, J\gamma$ one can compute the derivatives of the forms $\omega, \gamma, J\gamma$. For some $w_1^\pm\in\Omega^0, w_2^\pm\in\Omega^2_8$,$w_3\in\Omega^3_{12}$ and  $w_4,w_5\in\Omega^1$ 
\begin{align}
\begin{split}
    \label{eqns:SU-torsion}
    d\omega&= w_1^+ \gamma+w_1^- J\gamma+w_4\wedge\omega+w_3,\\
    d\gamma&=\frac{2w_1^-}{3}  \omega^2+w_5\wedge\gamma+w_2^+\wedge\omega,\\
    d J\gamma&=-\frac{2w_1^+}{3}  \omega^2+Jw_5\wedge\gamma+w_2^-\wedge\omega.
    \end{split}
\end{align} 
The forms $w_i$ are the intrinsic torsion forms of the $\SU(3)$-structures \cites{spinorial-agricola,Ric-SU3,Chiossi-Simon} and define the torsion $T$ of the $\SU(3)$-structure. The vanishing of $T$ implies $\rm{Hol}(\del^{LC})\subseteq \SU(3)$. 

The torsion $T$ of a $G$-structure is the space $T^*M\otimes \mathfrak{g}^\perp$ which for $G=\SU(3)$ turns out to be a $42$-dimensional space. As irreducible $\SU(3)$-modules the torsion space decomposes as follows
\begin{align*}
    T^*M\otimes\mathfrak{su}(3)^\perp \cong \mathcal{W}_1^{\pm}\oplus\mathcal{W}_2^{\pm}\oplus \mathcal{W}_3\oplus\mathcal{W}_4\oplus\mathcal{W}_5.
\end{align*}
The $\SU(3)$-structure is torsion-free or Calabi--Yau if and only if $T=0$ and is nearly K\"ahler if $T\in\mathcal{W}_1^\pm$. For a more thorough description of these torsion classes see \cite{Chiossi-Simon}, \cite{spinorial-agricola}. In this article we are interested in a special torsion class of $\SU(3)$-structures known as nearly half-flat which arise when the $\G2$-structure on $M\times I$ is a nearly parallel $\G2$-structure. The torsion for a nearly half-flat structure lies is $\mathcal{W}_1\oplus \mathcal{W}_2^-\oplus \mathcal{W}_3$. We describe this torsion class in detail in the following section. 

A $\G2$-structure is defined by the reduction of the structure group of the frame bundle of a $7$-dimensional manifold $N$ to the Lie group $\G2\subset \SO(7)$. The existence of a $\G2$-structure on $N$ is characterized by a positive $3$-form $\g2$ preserved by the action of $\G2$ on $\Omega^3(N)$ \cite{Gray1971}. Such a structure exists if and only if the manifold is orientable and spin, conditions which are respectively equivalent to the vanishing
of the first and second Stiefel--Whitney classes. The $3$-form $\g2$ nonlinearly induces a Riemannian metric $g_{\g2}$ and an orientation $\vol_{\g2}$ on $N$ and hence a Hodge
star operator $*_{\g2}$. We denote the Hodge dual $4$-form $*_{\g2}\g2$ by $\psi$. Pointwise we have $\|\g2\|^2=\|\psi\|^2 = 7$, where the norm is taken with respect to the metric induced by $\g2$.  

Similar to $\SU(3)$-structure, a $\G2$-structure on $N$ induces a splitting of the spaces of differential forms into irreducible $\G2$ representations.  The space of $2$-forms $\Omega^2$ and $3$-forms $\Omega^3$ decompose as 
\begin{align*}
\Omega^2&=\Omega^2_7\oplus \Omega^2_{14}, \\
\Omega^3&=\Omega^3_1\oplus \Omega^3_7\oplus \Omega^3_{27}.
\end{align*}
 More precisely, we have the following description of the space of forms : \begin{align*} 
 \Omega^2_7 &=\{X\lrcorner \g2\mid X\in \Gamma(TN)\} = \{\beta \in \Omega^2\mid *(\g2\wedge \beta)=2\beta\} , \\
 \Omega^2_{14} &=\{\beta \in \Omega^2(N)\mid \beta \wedge \psi =0 \} = \{\beta\in \Omega^2\mid *(\g2\wedge \beta)=-\beta\}. 
 \end{align*}

Similarly, for $3$-forms
\begin{align*}
\Omega^3_1 &=\{ f\g2 \mid f\in C^{\infty}(N)\}, \\
\Omega^3_7 & = \{ X\lrcorner \psi \mid X\in \Gamma(TN)\} = \{*(\alpha \wedge \g2) \mid \alpha \in \Omega^1\}, \\
\Omega^3_{27} & = \{ \eta \in \Omega^3(N) \ \mid \ \eta\wedge \g2 = 0 = \eta\wedge \psi\}.
\end{align*}
The decomposition of $\Omega^4$ and $\Omega^5$ are obtained by taking the respective Hodge stars with respect $*_\g2$.

Given a $\G2$-structure $\g2$ on $M$, we can decompose $d\g2$ and $d\psi$ according to the above decomposition. This defines the \emph{torsion forms}, which are unique differential forms $\tau_0 \in \Omega^0$, $\tau_1 \in \Omega^1$, $\tau_2 \in \Omega^2_{14}$ and $\tau_3 \in \Omega^3_{27}$ such that (see \cite{skflow})
\begin{align*}
d\g2 &= \tau_0\psi + 3\tau_1\wedge \g2 + *_{\g2}\tau_3,  \\
d\psi &= 4\tau_1\wedge \psi + *_{\g2} \tau_2.
\end{align*}
Here the torsion lives in the $49$-dimensional space $T^*N\otimes \mathfrak{g}_2^\perp$ and decomposes into irreducible $\G2$-modules as follows
\begin{align*}
    T^*N\otimes \mathfrak{g}_2^\perp \cong \mathcal{X}_0\oplus \mathcal{X}_1\oplus \mathcal{X}_2\oplus\mathcal{X}_3.
\end{align*}
These torsion forms give rise to the sixteen classes of $\G2$-structures and $T=0$ if and only if $d\g2=d\psi=0$ (see \cite{Fernandez-Gray}, \cite{classification_G2}). The torsion class for which $T\in \mathcal{X}_0$ is called nearly parallel $\G2$-structure. 
\begin{defn}
A $\G2$-structure $\g2$ is {\bf{nearly parallel }}if and only if there exists $\lambda\neq 0$ such that
\begin{align}
d\g2=\lambda\psi \ \ \ \ \textup{and} \ \ \ \ d\psi=0. \label{eq:ng2defn}
\end{align} 
\end{defn}

\noindent
In this case, $T_{ij}=\dfrac{\lambda}{4}(g_\g2)_{ij}$. 

\begin{rem}
If $\g2$ is a nearly $\G2$-structure differentiating \eqref{eq:ng2defn} gives $d\lambda\wedge \psi =0$ which implies $d\lambda =0$, as wedge product with $\psi$ is an isomorphism from $\Omega^1_7$ to $\Omega^5_7$. Thus, if $N$ is connected $\lambda$ is a constant.

In this article we are interested in parameterizing the $\SU(3)$-structures that arise on the equidistant orientable hypersurfaces of manifolds with nearly parallel $\G2$-structures.
\end{rem}
\section{Evolution equations from $\SU(3)$ to $\G2$}\label{sec:evolution-eqns}
The exceptional Lie group $\G2$ is the group of automorphisms of the Octonions $\mathbb{O}$ that preserves the splitting $\mathbb{O}\cong \R+\rm{Im}\mathbb O$. Then one can define the Lie group $\SU(3)$ as the subgroup of $\G2$ that preserves an imaginary unit Octonion. This fact indicates the presence of an $\SU(3)$-structure on an orientable hypersurface of a manifold with $\G2$-structure which led Calabi \cite{Calabi-hypersurface} and Gray \cite{Gray-hypersurface} study the induced $\SU(3)$-structure on orientable hypersurfaces of $\rm{Im} \mathbb{O}$.   

 Let $I\subset\R$ be an interval. Given a one-parameter family of $\SU(3)$-structures $(\omega(t),\gamma(t))$ on $M$, one can define a $\G2$-structure $(\g2,\psi)$ on $I\times M$ by 
 \begin{align}
     \begin{split}\label{g2-eqns}
         \g2&=dt\wedge\omega(t)+\gamma(t)\\
         \psi=*_\g2\g2&=\frac{1}{2}\omega^2(t)-dt\wedge J\gamma(t).
     \end{split}
 \end{align}
 
 \noindent
\textbf{Conditions on $\omega,\gamma$ for nearly $\G2$-structure.} Now suppose $\g2,\psi$ define a nearly $\G2$-structure, that is for some non-zero constant $\lambda\in\R$ we  have   
\begin{align*}
    d\g2&=\lambda\psi.
\end{align*} From \eqref{g2-eqns} we get that
\begin{align}
    \label{dphi}d\g2&=dt\wedge(- d\omega(t)+\gamma'(t))+d\gamma(t),\\
   \label{dpsi} d\psi&=\frac{ d(\omega^2(t))}{2}+dt\wedge(\frac{1}{2}(\omega^2)'(t)+dJ\gamma(t)).
\end{align} Thus $(\g2,\psi)$ defines a nearly $\G2$-structure if and only if 
\begin{align}
\begin{split}\label{SU3FORNG2}
       d\omega(t)&=\gamma'(t)+{\lambda} J\gamma(t),\\
        d\gamma(t)&=\frac{\lambda}{2}\omega^2(t)\\
        dJ\gamma(t)&=-\frac{(\omega^2)'}{2}.  
\end{split}
    \end{align}
  For some $\alpha,\beta \in C^\infty(M), Z\in \Omega^1(M), \gamma'_{12}\in \Omega^3_{12}$ the $3$-form $\gamma'=\alpha\gamma+\beta J\gamma+Z\wedge\omega+\gamma'_{12}$.
  
    Equation \eqref{SU3FORNG2} immediately imply $d\omega^2=0$ which further implies that $\omega\wedge d\omega=0$. Thus we get
    \begin{align*}
      0&= \omega\wedge d\omega = \omega\wedge\gamma'+\lambda\omega\wedge J\gamma.
    \end{align*} Since $\omega\wedge J\gamma = 0$ the above implies that $\omega\wedge\gamma'=0$ hence $\beta=0$. Since $\gamma\wedge\omega=0$ we have
    \begin{align*}
        d\gamma\wedge\omega&=\gamma\wedge d\omega,
    \end{align*} which from \eqref{SU3FORNG2} implies
    \begin{align*}
        \frac{\lambda}{2}\omega^3&=\gamma\wedge\gamma'+\lambda (\gamma\wedge J\gamma)=\gamma\wedge\gamma'+\frac{2}{3}\lambda\omega^3.
    \end{align*} 
    
Thus, $\gamma\wedge\gamma'=(-1/6)\lambda\omega^3$ which implies $\alpha=-\frac{\lambda}{4}$ and we obtain 
\begin{align*}
    \gamma'&=\alpha\gamma-\frac{\lambda J\gamma}{4}+\gamma'_{12}.
\end{align*}

Now let $\omega'=p\omega+X\lrcorner\gamma+\omega'_{8}$ for some $p\in C^{\infty}(M), X\in \Gamma(TM),\omega'_8\in \Omega^2_8$. 
The equation $d\psi=0$ implies $dJ\gamma=-\omega'\wedge\omega$ which further implies $p=2\alpha/3$ and $X=0$. 
Substituting $\alpha=w_1^+, \gamma'_{12}=w_3,$ and $\omega'_8={w}_2^-$ in \eqref{SU3FORNG2} we get that 
\begin{align}
    \begin{split}
        \label{su3-torsionforNG2} d\omega&=w_1^+\gamma+\frac{3\lambda}{4}J\gamma+w_3,\\
        d\gamma&=\frac{\lambda}{2}\omega^2,\\
        dJ\gamma&=-\frac{2}{3}w_1^+\omega^2+w_2^-\wedge\omega.
         \end{split} 
\end{align}
Thus it is clear that for nearly half-flat $\SU(3)$-structures the only non vanishing torsion forms are $w_1^\pm$, $w_2^-$ and $w_3$. But since $w_1^-$ is a constant completely determined by the nearly parallel $\G2$-structure, the dimension of the unknown torsion is given by $\dim(\R)+\dim(\Omega^2_8)+\dim(\Omega^3_{12})=21$, which is rather similar to "\textit{half-flat}" $\SU(3)$-structures (\cite{Madsen-Salamon}) in the sense that $W_1^-$ is the only extra non-vanishing torsion. Also observe that none of the forms $\omega,\gamma,J\gamma$ are necessarily closed as opposed to the half-flat case where $\gamma$ is always closed.  We call the $\SU(3)$-structures whose torsion is given by \eqref{su3-torsionforNG2}, "\textit{nearly half-flat}" $\SU(3)$-structures.

The condition $d\gamma=\frac{\lambda}{2}\omega^2$ implies $d\omega^2=0$ and forces the torsion terms $w_2^+, w_4$ and $w_5$ to vanish  in \eqref{eqns:SU-torsion}. Thus $d\gamma=\frac{\lambda}{2}\omega^2$ is a sufficient condition for an $\SU(3)$-structure on a $6$-dimensional manifold to be nearly half-flat as first described in \cite{nearlyhypo-nhf}.
\begin{defn}
 
    An $\SU(3)$-structure $(\omega,\gamma)$ on a 6-manifold $N^6$ is nearly half-flat if for some non-zero real constant $\lambda$
    \begin{align*}
        d\gamma&= \frac{\lambda}{2}\omega^2.
    \end{align*}
   
\end{defn}

 \begin{table}[h!]\label{Table:torsionforms}
     \centering
     \begin{tabular}{|w{c}{2cm}|w{c}{2cm}|}
     \hline
          \cellcolor{blue!20}${\mathcal{W}_1^+}$&\cellcolor{red!20}${\mathcal{W}_1^-}$  \\
         \hline
          ${\mathcal{W}}_2^+$ &\cellcolor{blue!20} $\mathcal{W}_2^- $ \\
          \hline
         \multicolumn{2}{|c|}{\cellcolor{blue!20}${\mathcal{W}_3}$}  \\
          \hline
          \multicolumn{2}{|c|}{${\mathcal{W}_4}$}\\
          \hline
           \multicolumn{2}{|c|}{$\mathcal{W}_5$}\\
           \hline
     \end{tabular}
     \caption{The shaded cells of the table represents the non-zero torsion classes for a nearly half-flat structure. The different shade for the torsion class $\mathcal{W}_1^-$ emphasises that it is a non-zero constant. }
 \end{table}
Now we can state the result originally proved in \cite[Proposition 5.2]{nearlyhypo-nhf}

\begin{prop}\label{prop:su3tog2}
   Let $I\subseteq \R$ parameterised by $t$. A family of nearly half-flat structures $(\omega(t),\gamma(t))$ on $N^6$ can be lifted to a nearly $\G2$-structure $\g2=dt\wedge\omega+\gamma$ on $N^6\times I$ if and only if $(\omega,\gamma)$ satisfy the evolution equation
   \begin{align}
   \begin{split} \label{eqns:evolution}
       \gamma'(t)&= d\omega(t)-\lambda J\gamma(t).
   \end{split}
   \end{align}
\end{prop}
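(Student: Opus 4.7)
The equations to unpack are already present in the excerpt: formulas \eqref{dphi}--\eqref{dpsi} give $d\g2$ and $d\psi$ on $N^6\times I$ in terms of $t$-derivatives, and the three lines of \eqref{SU3FORNG2} together are equivalent to the nearly $\G2$ condition $d\g2=\lambda\psi$ with $d\psi=0$. My strategy is to show that, once we assume nearly half-flatness of the family, the evolution equation \eqref{eqns:evolution} is equivalent to the full system \eqref{SU3FORNG2}. The forward direction is essentially a reading of the first line of \eqref{SU3FORNG2}; the content lies in the converse.

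\textbf{Forward direction.} Suppose $\g2=dt\wedge\omega(t)+\gamma(t)$ is nearly parallel. Then \eqref{SU3FORNG2} holds, and in particular its first line rearranges to $\gamma'(t)=d\omega(t)-\lambda J\gamma(t)$, which is exactly \eqref{eqns:evolution}. (Nearly half-flatness is also read off from the second line.)

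\textbf{Converse.} Assume $(\omega(t),\gamma(t))$ is a family of nearly half-flat structures, so $d\gamma(t)=\tfrac{\lambda}{2}\omega^2(t)$, and that \eqref{eqns:evolution} holds. Substituting $\gamma'-d\omega=-\lambda J\gamma$ into \eqref{dphi} gives
\[
d\g2 \;=\; dt\wedge(-d\omega+\gamma')+d\gamma \;=\; -\lambda\, dt\wedge J\gamma+\tfrac{\lambda}{2}\omega^2 \;=\; \lambda\psi,
\]
so $d\g2=\lambda\psi$ holds immediately. It remains to check that $d\psi=0$; by \eqref{dpsi} this requires $d(\omega^2)=0$ and $dJ\gamma=-\tfrac{1}{2}(\omega^2)'$. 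The first follows by exterior differentiating the nearly half-flat identity $d\gamma=\tfrac{\lambda}{2}\omega^2$, since $\lambda\neq 0$. For the second, differentiate the same identity with respect to $t$ to obtain $d\gamma'=\tfrac{\lambda}{2}(\omega^2)'$; substituting $\gamma'=d\omega-\lambda J\gamma$ from \eqref{eqns:evolution} (and using $d^2=0$) yields $-\lambda\, dJ\gamma=\tfrac{\lambda}{2}(\omega^2)'$, i.e.\ $dJ\gamma=-\tfrac{1}{2}(\omega^2)'$, as required.

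The only genuinely nontrivial point is the derivation of the third equation in \eqref{SU3FORNG2} from the other two, and the trick above, commuting $d$ with $\partial_t$ on the nearly half-flat condition, handles it in one line. No subtlety arises from compatibility of the evolving almost complex structure $J(t)$ with $\gamma(t)$, since $J\gamma$ is used only as a $t$-dependent $3$-form and its $t$-derivative never enters the argument. Hence \eqref{eqns:evolution} together with nearly half-flatness is equivalent to $\g2$ being nearly parallel.
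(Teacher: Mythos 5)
Your proposal is correct and follows essentially the same route as the paper: the statement is reduced to the system \eqref{SU3FORNG2}, and the third equation is recovered by differentiating the nearly half-flat identity $d\gamma=\tfrac{\lambda}{2}\omega^2$ in $t$ and substituting the evolution equation, which is precisely the observation the paper records in the remark immediately following the proposition. (The only cosmetic difference is that the paper could also note $d\psi=0$ is automatic from $d(d\g2)=0$ once $d\g2=\lambda\psi$ holds with $\lambda$ constant, but your direct verification is equivalent.)
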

One should note that if $(\omega(t),\gamma(t))$ is nearly half-flat for all $t\in I$ then one need not impose the condition $dJ\gamma=-\omega'\wedge\omega$ as it can be obtained from differentiating the equation $d\gamma(t)=\lambda/2 \omega^2(t)$ with respect to $t$ and use the evolution equation mentioned in the above proposition. If only the initial $\SU(3)$-structure is nearly half-flat then the equation $dJ\gamma=-\omega'\wedge\omega$ implies that $(\omega(t),\gamma(t)$ is nearly half-flat for all time $t\in I$.

In \cites{cvlt-liftnhf, Conti-embedding} established the existence of a solution of
the system \eqref{eqns:evolution} for all time given the initial nearly half-flat structure is real analytic. This result is analogous to the result of Bryant \cite{Bryant-non-embedding} in the half-flat case where he proves that unless one assumes real analytic initial data, Hitchin's flow equations do not necessarily admit a solution. For more on the existence of the lifts of the nearly half-flat structure to a nearly parallel $\G2$-structure see \cite{liftingsu3tonhf, Fabian-thesis}.

% Given an initial nearly half-flat structure \cite{Fabian-thesis} established the existence, uniqueness and naturality of a solution of
% the system \eqref{eqns:evolution} for all time. For compact manifolds, it is shown in \cite{liftingsu3tonhf} that a real-analytic solution
% of these evolution equations which is a nearly half-flat $\SU(3)$-structure for a time $t = t_0$ already defines a nearly parallel $\G2$-structure. In \cite{Fabian-thesis}, the author extended the evolution equations to all possible signatures and gave a simplified proof for the properties of the solutions which also holds for non-compact manifolds.

\begin{rem}
If we put $\lambda=0$ (i.e. torsion-free) in \eqref{su3-torsionforNG2} we get back the half-flat conditions as we should! 
\end{rem}    

In the above notation for intrinsic torsion forms the scalar curvature $s$ of the Levi-Civita connection for nearly half-flat $\SU(3)$-structures is given by 
\begin{align}\label{eqn:scalar-nhf}
    s&= \frac{10}{3}(w_1^+)^2+\frac{15\lambda^2}{8}-\frac{1}{2}|w_2^-|^2-\frac{1}{2}|w_3|^2.
\end{align} The general expression for the scalar curvature in terms of the intrinsic torsion of the $\SU(3)$-structure was derived in \cite[Theorem 3.4]{Ric-SU3}.
\begin{lemma}\label{lemma:w3_w2}
    On a connected manifold $N^6$ with a nearly half-flat structure $(\omega,\gamma)$
    \begin{enumerate}
      \item $dw_3=0$ implies that $w_2^-=0$ and $w_1^+$ is constant.
        \item $w_2^-=0$ implies that $w_1^+$ is constant.
       \end{enumerate}
\end{lemma}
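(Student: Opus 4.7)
The plan is to extract both assertions from $d^2=0$ applied to the structure equations \eqref{su3-torsionforNG2}. First I would compute $d^2\omega$: using $d\gamma=\tfrac{\lambda}{2}\omega^2$ and the given formula for $dJ\gamma$, the term $w_1^+\,d\gamma=\tfrac{\lambda w_1^+}{2}\omega^2$ cancels against the $-\tfrac{2}{3}w_1^+\omega^2$ piece inside $\tfrac{3\lambda}{4}\,dJ\gamma$, leaving the clean identity $dw_1^+\wedge\gamma + \tfrac{3\lambda}{4}\,w_2^-\wedge\omega + dw_3 = 0$, which is a $4$-form equation on $N^6$.

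For part (1), setting $dw_3=0$ reduces this identity to $dw_1^+\wedge\gamma = -\tfrac{3\lambda}{4}\,w_2^-\wedge\omega$. Under the $\SU(3)$-decomposition $\Omega^4=\Omega^4_1\oplus\Omega^4_6\oplus\Omega^4_8$ obtained by applying $*$ to \eqref{form-decomposition}, the left-hand side lies in $\Omega^4_6=*\Omega^2_6$, as is immediate from the description $\Omega^2_6=\{*(\alpha\wedge\gamma)\}$, while $w_2^-\wedge\omega = -*w_2^- \in \Omega^4_8$ by the defining property of $\Omega^2_8$. Orthogonality of these two $\SU(3)$-isotypic components forces both sides to vanish: applying $*$ to the right-hand side gives $w_2^-=0$, and $dw_1^+=0$ follows because the pointwise map $\alpha\mapsto\alpha\wedge\gamma$ from $\Omega^1$ to $\Omega^4_6$ is a nonzero $\SU(3)$-equivariant map between irreducible modules of the same dimension, hence an isomorphism by Schur. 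Connectedness of $N^6$ then gives $w_1^+$ constant.

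For part (2), assume $w_2^-=0$, so that $dJ\gamma=-\tfrac{2}{3}w_1^+\omega^2$. Applying $d^2=0$ and collecting terms yields $dw_1^+\wedge\omega^2 + 2w_1^+\,\omega\wedge d\omega = 0$. The cross term $\omega\wedge d\omega$ vanishes because $\omega\wedge\gamma=\omega\wedge J\gamma=0$ (these are $(4,1)+(1,4)$ components in a complex $3$-dimensional fibre, hence identically zero) and $\omega\wedge w_3=0$ by the defining property of $\Omega^3_{12}$. Hence $dw_1^+\wedge\omega^2=0$, and the hard Lefschetz isomorphism $\Omega^1\xrightarrow{\wedge\omega^2}\Omega^5$ on a symplectic $6$-manifold forces $dw_1^+=0$, so $w_1^+$ is constant.

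The main thing to watch is the bookkeeping that identifies each wedge product with the correct $\SU(3)$-irreducible component of $\Omega^4$, together with the injectivity/isomorphism of the relevant equivariant wedge maps. Once these are pinned down, both statements follow from orthogonality of isotypic components with no analytic or compactness hypothesis required.
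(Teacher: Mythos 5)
Your argument is correct and is essentially the paper's own proof: part (1) follows from $d(d\omega)=0$ (your identity $dw_1^+\wedge\gamma+\tfrac{3\lambda}{4}\,w_2^-\wedge\omega+dw_3=0$ is exactly the paper's displayed relation, including the $\tfrac{3\lambda}{4}$ coefficient the paper omits) combined with the $\SU(3)$-type decomposition of $\Omega^4$, and part (2) from $d(dJ\gamma)=0$ together with $d\omega^2=0$ and the Lefschetz isomorphism. You have merely made explicit the injectivity of the wedge maps $\alpha\mapsto\alpha\wedge\gamma$ and $\alpha\mapsto\alpha\wedge\omega^2$ that the paper leaves implicit.
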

\begin{proof}
    Both the assertions follow from \eqref{su3-torsionforNG2}. The first follows from equating $d(d\omega)=0$ which implies
    \begin{align*}
        dw_3&= -dw_1^+\wedge\gamma-w_2^-\wedge\omega.
    \end{align*}
    The second follows by using $d\omega^2=0$ in $d(dJ\gamma)=0$. 
\end{proof}

Thus if $w_3=0$ the torsion of the nearly half-flat structure is in $\mathcal{W}_1$ and is constant. Moreover for a nearly half-flat structure $\mathcal{W}_1^-$ is always non-zero so the only possible torsion classes for a nearly half-flat structure are 

\begin{align*}
    \mathcal{W}_1^-, \  \mathcal{W}_1,\ \mathcal{W}_1^-+\mathcal{W}_3 , \ \mathcal{W}_1+\mathcal{W}_3,\ \mathcal{W}_1^-+\mathcal{W}_2^-+\mathcal{W}_3, \ \mathcal{W}_1+\mathcal{W}_2^-+\mathcal{W}_3.    
\end{align*} 

Also note that of the torsion class for $(\omega,\gamma)$ is in $\mathcal{W}_1$ then there exists a $\gamma_\theta\in \{\cos(\theta)\gamma+\sin{(\theta)}J\gamma \ | \ \theta\in \R\}$ such that $(\omega,\gamma_\theta)$ is nearly K\"ahler.

An $\SU(3)$-structure $(\omega,\gamma)$ is half-flat if and only if the forms $\gamma$ and $\omega^2$ are closed. Under some special hypothesis a nearly half-flat structure $(\omega,\gamma)$ can be rotated to a half-flat structure.

\begin{prop}\label{prop:nhf-hf}
Let $(\omega,\gamma)$ be a nearly half-flat $\SU(3)$-structure such that $w_2^-=0$. Then for $\theta=\tan^{-1}\left(\frac{3\lambda}{4w_1^+}\right)$ the $\SU(3)$-structure $(\omega,\gamma_\theta:=\cos(\theta)\gamma+\sin(\theta)J\gamma)$ is half-flat.
\end{prop}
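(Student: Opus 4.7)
The plan is to write $\gamma_\theta = \cos\theta\,\gamma + \sin\theta\,J\gamma$, observe that rotating within the circle $\Gamma$ does not alter the induced metric or the underlying almost complex structure $J$ (the rotation corresponds to multiplying $\Omega = \gamma + iJ\gamma$ by the phase $e^{-i\theta}$), and then verify the two defining conditions of half-flatness, namely $d\gamma_\theta = 0$ and $d(\omega^2) = 0$.

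First I would invoke Lemma \ref{lemma:w3_w2}(2): the hypothesis $w_2^-=0$ guarantees that the function $w_1^+$ is constant on $N^6$, so the proposed angle $\theta=\arctan(3\lambda/(4w_1^+))$ is a genuine constant (with the obvious convention $\theta = \pi/2$ in the degenerate case $w_1^+=0$). This is the key point which makes $\gamma_\theta$ a globally well-defined differential form rather than merely a pointwise combination.

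Next I would simply differentiate. Using \eqref{su3-torsionforNG2} with $w_2^-=0$,
\begin{align*}
d\gamma &= \tfrac{\lambda}{2}\omega^2, \qquad dJ\gamma = -\tfrac{2}{3}w_1^+\,\omega^2,
\end{align*}
so that
\begin{align*}
d\gamma_\theta &= \cos\theta\, d\gamma + \sin\theta\, dJ\gamma
= \left(\tfrac{\lambda}{2}\cos\theta - \tfrac{2}{3}w_1^+\sin\theta\right)\omega^2.
\end{align*}
The coefficient vanishes precisely when $\tan\theta = 3\lambda/(4w_1^+)$, which is the prescribed angle.

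Finally, for $d(\omega^2)$ I would use $d(\omega^2)=2\,\omega\wedge d\omega$ together with the expression for $d\omega$ in \eqref{su3-torsionforNG2}. The three summands in $d\omega$ all vanish after wedging with $\omega$: the terms $\gamma$ and $J\gamma$ satisfy $\omega\wedge\gamma=\omega\wedge J\gamma=0$ by the standard $\SU(3)$ identities, and $w_3\in\Omega^3_{12}$ satisfies $w_3\wedge\omega=0$ by the very definition of $\Omega^3_{12}$. Hence $d(\omega^2)=0$ holds automatically, regardless of $\theta$. Combining this with the vanishing of $d\gamma_\theta$ at the chosen angle shows that $(\omega,\gamma_\theta)$ is half-flat.

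There is no real obstacle here: the only subtle point is the need to know that $w_1^+$ is constant so that $\theta$ makes sense globally, and that is exactly the content of Lemma \ref{lemma:w3_w2}(2). Everything else is a direct calculation from the torsion identities \eqref{su3-torsionforNG2}.
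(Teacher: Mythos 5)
Your argument is correct and matches the paper's own proof essentially step for step: both invoke Lemma \ref{lemma:w3_w2} to get $w_1^+$ constant, compute $d\gamma_\theta=\bigl(\tfrac{\lambda}{2}\cos\theta-\tfrac{2}{3}w_1^+\sin\theta\bigr)\omega^2$ from \eqref{su3-torsionforNG2}, and conclude that the coefficient vanishes exactly at $\tan\theta=3\lambda/(4w_1^+)$. The only cosmetic difference is how $d(\omega^2)=0$ is justified (you wedge $d\omega$ with $\omega$, the paper takes it as already established from $d\gamma=\tfrac{\lambda}{2}\omega^2$), which changes nothing of substance.
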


\begin{proof}
Since $d(\omega^2)=0$, the $\SU(3)$-structure $(\omega,\gamma_\theta)$ is half-flat if and only if $d\gamma_\theta=0$. 

From Lemma \ref{lemma:w3_w2} we know that  $w_2^-=0$ implies $dw_1^+=0$ hence
\begin{align*}
    d\gamma_\theta&= \cos(\theta) d\gamma+\sin(\theta) d J\gamma\\
    &= \frac{\cos(\theta) \lambda}{2}\omega^2-\frac{2\sin(\theta)}{3} w_1^+\omega^2.
\end{align*}
This implies that $d\gamma_\theta=0 \iff \tan(\theta)=\frac{3\lambda}{4w_1^+}$. 
\end{proof}

Thus given a nearly half-flat $\SU(3)$-structure $(\omega,\gamma)$ satisfying the above hypothesis there is a half-flat $\SU(3)$-structure that induces the same almost complex structure and metric as $(\omega,\gamma)$.

If we further assume $w_1^+=0$ in the above proposition we get that the $\SU(3)$-structure $(\omega,J\gamma)$ is half-flat which we already know from \eqref{su3-torsionforNG2}. 

\begin{rem}
   The angle described in Proposition \ref{prop:nhf-hf} can also be related to the Special Lagrangian phase angle. Since the $3$-form $\gamma_\theta$ as defined in Proposition \ref{prop:nhf-hf} is closed one can define special Lagrangian submanifolds on $(N^6,\omega,J)$ calibrated by $\gamma_\theta$.  
\end{rem}

% \begin{prop}
%  Let $(\omega,\gamma)$ be a nearly half-flat $\SU(3)$-structure on $M^6$. If $\tilde{\gamma}\in\Omega^3(M)$ is a stable $3$-form such that $(\omega,\tilde\gamma)$ defines an SU(3)-structure and $d\tilde\gamma=0$ then $(\omega,\Tilde{\gamma})$ defines a half-flat $\SU(3)$-structure.
% \end{prop}
% \begin{proof}
%     Since $d\Tilde{\gamma}=0$, from \eqref{eqns:SU-torsion} we have that 
%     \begin{align*}
%         d\omega&= \tau_0\Tilde{\gamma}+\tau_1\wedge\omega+\tau_3,\\
%         d \Tilde{J}\Tilde{\gamma}&=-\frac{2}{3}\tau_0\omega^2+\tau_2\wedge\omega.
%     \end{align*}
%     Equating $d\omega$ from above and in \eqref{su3-torsionforNG2} we get
%     \begin{align*}
%         \tau_1\wedge\omega&= w_1\gamma-\tau_0\Tilde{\gamma}+\frac{3\lambda}{4}J\gamma+w_3-\tau_3
%     \end{align*}
%     Wedging with $\omega$ both sides implies $\tau_1\wedge\omega^2=0\implies \tau_1=0$. 
% \end{proof}

% For a nearly half-flat structure $(\omega,\gamma)$ since $d\omega^2=0$ for any $\epsilon>0$ and parameter $s$ the $3$-form $\gamma_s:=\gamma+\epsilon s d\omega$ defines a primitive $3$-form such that $d\gamma_s=\lambda/2 \omega^2$. For $\epsilon$ sufficiently small $\gamma_s$ is also stable.

% \begin{prop}
%     Given a nearly half-flat structure $(\omega,\gamma)$ on $M^6$, for sufficiently small $\epsilon >0$ and parameter $s\in I\subset \R$ the one parameter family of $\SU(3)$-structures given by \begin{align*}
%         (\omega,\gamma_s:=\gamma+\epsilon s d\omega)
%     \end{align*} is nearly half-flat for all $s\in I$.
% \end{prop}

\section{Parameterising invariant nearly half-flat structures on $S^3\times S^3$}\label{sec:setup_half_flat}
Let $M$ denote the six dimensional manifold $S^3\times S^3$. We will use the notation as used in \cite{Madsen-Salamon}. The tangent bundle $TM$ is trivial since $M$ is a Lie group. Thus $TM\cong M\times \R^6\cong M\times \so({4})\cong M\times \su(2)\oplus\su(2)$. We will denote by $A,B$ the $2$ copies of $\su(2)$ in the cotangent bundle at the identity, $T_{\id}^*M\cong A\oplus B$. We choose bases $\{e^1,e^3,e^5\}$ and $\{e^2,e^4,e^6\}$ for $A$ and $B$ respectively such that \begin{align*}
    &de^1=e^{35}, \ de^3=-e^{15}, \ de^5=e^{13} \\
     &de^2=e^{46}, \ de^4=-e^{26}, \  de^6=e^{24}.
\end{align*}

We now describe the invariant nearly half-flat structures on $S^3\times S^3$ in terms of $3\times 3$ real matrices similar to \textsection 3 in \cite{Madsen-Salamon}. Using the same notation for $T^*M=A\otimes B =\colon U \cong M_{3\times 3}(\R)$ where $A,B\cong \su(2)$ we have 
\begin{align*}
    \Omega^2M&\cong \Omega^2A\oplus (A\otimes B)\oplus \Omega^2B,\\
    \Omega^3M &\cong \Omega^3A\oplus(\Omega^2A\otimes B)\oplus (A\otimes\Omega^2B)\oplus\Omega^3B. 
\end{align*} Since $d(\omega^2)=0$ from \eqref{su3-torsionforNG2} we have $\omega^2\in\Omega^2A\otimes\Omega^2B$. Since $\omega^2$ is also non-degenerate, $\omega\in A\otimes B$ . Thus $\omega$ can be represented by a $3\times 3$ real matrix $P$ by $\omega=\sum_{i,j=1}^3P_{ij}e^{2i-1}\wedge e^{2j}$. The non-degeneracy of $\omega$ implies that $\det P \neq 0$ anywhere. Moreover, the $4$-form $\omega^2=-2\sum_{i,j=1}^3 {\mathrm{Ad}}(P^T)_{ij}de^{2i-1}\wedge de^{2j}$.

We define a $3$-form $\delta\in (\Omega^2A\otimes B)\oplus (A\otimes\Omega^2B)$ such that $\delta\wedge\omega=0$ and $d\delta=\omega^2$. A symmetric choice of $\delta$ in $A,B$ yields
\begin{align*}
    \delta&= -\sum_{i,j=1}^3\rm{Adj}(P^T)_{ij}(de^{2i-1}\wedge e^{2j}+e^{2i-1}\wedge de^{2j}).
\end{align*}
Since $d\gamma=\lambda/2\omega^2$ for nearly half-flat structures the $3$-form $\gamma-\frac{\lambda}{2}\delta$ is closed and hence for some $a,b\in \R$ and $d\beta\in(\Omega^2A\otimes B) \oplus (A\otimes\Omega^2B)$ we can assume $\gamma$ to be of the form 
\begin{align*}
    \gamma&= ae^{135}+be^{246}+d\beta+\frac{\lambda}{2}\delta.
\end{align*}
The $2$-form $\beta\in A\otimes B$ can be represented by a $3\times 3$ real matrix $Q$ and we have $\beta=\sum_{i,j=1}^3Q_{ij}e^{2i-1}\wedge e^{2j}$. The $\Omega^2A\otimes B, A\otimes\Omega^2B$ components of $\gamma$ are given by matrices $Q_1,Q_2$ respectively where 
\begin{align}
\begin{split}\label{eqns:Q1Q2}
    Q_1&=Q-\frac{\lambda}{2}\rm{Adj}(P^T),\\
    Q_2&=-Q-\frac{\lambda}{2}\rm{Adj}(P^T),
\end{split}
\end{align} and we have 
\begin{align*}
    \gamma&=ae^{135}+be^{246}+\sum_{i,j=1}^3\left((Q_1)_{ij}de^{2i-1}\wedge e^{2j}+(Q_2)_{ij}e^{2i-1}\wedge de^{2j}\right).
\end{align*}
The identity $\gamma\wedge\omega=0$ implies that $Q^TP$ is symmetric which also implies $Q_i^TP$ is symmetric for $i=1,2$.
\medskip

By using \eqref{J-Hitchin} we compute the almost complex structure $J_\gamma$ in terms of $a,b,Q_1,Q_2$. For $r=1,2,3$,
\begin{align}
\begin{split}\label{eqn:J-defn}
    Je^{2r-1}&= \frac{1}{\det(P)}\left((ab-\tr({Q_2Q_1^T}))e^{2r-1}+2\sum_{i=1}^3\left((Q_2Q_1^T)_{ri}e^{2i-1}-(aQ_2-{\rm{Adj}}(Q_1^T))_{ri}e^{2i}\right)\right),\\
    Je^{2r}&= \frac{1}{\det(P)}\left(-(ab-\tr({Q_1^TQ_2}))e^{2r}+2\sum_{i=1}^3\left((bQ_1^T-{\rm{Adj}}(Q_2))_{ri}e^{2i-1}-(Q_1^TQ_2)_{ri}e^{2i}\right)\right).    
\end{split}
    \end{align}
    
For $i=1,\dots,6$,
\begin{align*}
    J^2 e^i&=\frac{1}{(\det P)^2}((ab-\tr(Q_1^TQ_2))^2+4(a\det Q_2+b\det Q_1)-4\tr{(\rm{Adj}(Q_1^TQ_2))})e^i.
\end{align*}

Since $J^2=-\id$ we have 

\begin{align}\label{eqn:detP}
    \det P = (-(ab-\tr(Q_1^TQ_2))^2-4(a\det Q_2+b\det Q_1)+4\tr{(\rm{Adj}(Q_1^TQ_2))})^\frac{1}{2},
\end{align} which denotes the normalization condition for the nearly half-flat $\SU(3)$-structure.

\medskip

 The space of invariant nearly half-flat structures can now be parameterized by $(\lambda,a,b,P,Q)$ satisfying the commutativity relation $P^TQ=Q^TP$ and the normalization condition \eqref{eqn:detP}. 
 
 We denote the set of invariant nearly half-flat structures corresponding to a fixed $\lambda\in \R^*$ and a cohomology class $(a,b)$ for $\gamma-\frac{\lambda}{2}\delta$ by $\mathcal{H}_{\lambda,a,b}$. Using the isomorphism between  $M_{3\times 3}(\R)$ the space of real $3\times 3$ matrices and the space of real symmetric trace-free $4\times 4$ matrices $V$ we can describe the space $\mathcal{H}_{\lambda,a,b}$ as the kernel of the $\rm{SO}(4)$-equivariant map described in \cite[Section 3]{Madsen-Salamon}. Under the isomorphism between $M_{3\times 3}(\R)$ and $V$ the condition $Q^TP-P^TQ=0$ can be written as $[Q,P]=0$. Using this formulation we can describe the space of invariant nearly half-flat structures $\mathcal{H}_{\lambda,a,b}$ as a kernel of the moment map defined below. The proof follows verbatim as described in \cite[Theorem 1]{Madsen-Salamon} and hence omitted. 

\begin{thm}\label{thm:matrixdescr}
     The space of invariant nearly half-flat structures $\mathcal{H}_{\lambda,a,b}$ can be described as
     \begin{align*}
         \{(Q,P)\in U\oplus U \ | \ P,Q \ \text{satisfy} \ \eqref{eqn:detP}, \text{and} \  Q^TP =P^TQ  \}.
     \end{align*}
\end{thm}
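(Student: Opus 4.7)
The plan is to exhibit a bijection between $\mathcal{H}_{\lambda,a,b}$ and the subset of $U\oplus U$ cut out by the normalization \eqref{eqn:detP} and the commutativity relation $Q^TP=P^TQ$, running the argument of \cite[Thm. 1]{Madsen-Salamon} while tracking the additional summand $\tfrac{\lambda}{2}\delta$ forced by the nearly half-flat torsion. The whole proof is parameter-by-parameter: I would first show that every invariant nearly half-flat pair $(\omega,\gamma)$ with cohomology class $(a,b)$ for $\gamma-\tfrac{\lambda}{2}\delta$ is encoded by a unique $(P,Q)$ satisfying the constraints, then verify that any such $(P,Q)$ defines a genuine $\SU(3)$-structure of the required torsion type.

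For the forward direction, I would start from the decompositions $\Omega^2M\cong \Omega^2A\oplus(A\otimes B)\oplus\Omega^2B$ and the analogous splitting of $\Omega^3M$. The identity $d(\omega^2)=0$, which is part of the nearly half-flat condition via $d\gamma=\tfrac{\lambda}{2}\omega^2$, forces the $\Omega^2A$ and $\Omega^2B$ components of $\omega$ to vanish, so $\omega$ lives in $A\otimes B$ and produces the matrix $P$; non-degeneracy of $\omega$ gives $\det P\neq 0$. Since $d\delta=\omega^2$, the 3-form $\gamma-\tfrac{\lambda}{2}\delta$ is closed, so representing its class by $ae^{135}+be^{246}$ and writing the exact remainder as $d\beta$ with $\beta\in A\otimes B$ extracts the matrix $Q$. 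The identity $\gamma\wedge\omega=0$ then translates directly into $Q^TP=P^TQ$, and the $\SU(3)$-normalization $\Omega\wedge\bar\Omega=-\tfrac{4i}{3}\omega^3$ becomes \eqref{eqn:detP} once one invokes the formula \eqref{eqn:J-defn} for $J_\gamma$ and uses that this normalization is equivalent to $J_\gamma^2=-\id$.

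For the converse, given $(P,Q)$ in the described set, I would define $\omega$ and $\gamma$ by the explicit formulas obtained in the text, with $Q_1,Q_2$ given by \eqref{eqns:Q1Q2}. Non-degeneracy of $\omega$ is immediate from $\det P\neq 0$. The commutativity relation gives $\gamma\wedge\omega=0$, which together with \eqref{eqn:detP} ensures that the endomorphism produced by Hitchin's construction \eqref{J-Hitchin} squares to $-\id$, so $(\omega,\gamma)$ is a bona fide $\SU(3)$-structure. Finally $d\gamma=\tfrac{\lambda}{2}\omega^2$ holds by construction: the closed pieces $ae^{135}+be^{246}+d\beta$ contribute nothing, and $d\bigl(\tfrac{\lambda}{2}\delta\bigr)=\tfrac{\lambda}{2}\omega^2$.

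The main obstacle is the algebraic verification that the scalar equation \eqref{eqn:detP} is equivalent to $J_\gamma^2=-\id$ (and hence to the $\SU(3)$-normalization). This is the point where the matrix calculus becomes dense: one must expand $J_\gamma^2$ using \eqref{eqn:J-defn}, simplify the products involving $Q_1,Q_2$ and $\mathrm{Adj}(P^T)$, and recognize the resulting scalar as the square root appearing in \eqref{eqn:detP}. This is exactly the computation performed in \cite[Sect.~3]{Madsen-Salamon} for the half-flat case; the only new feature is bookkeeping the $\tfrac{\lambda}{2}\mathrm{Adj}(P^T)$-shift relating $Q$ to $(Q_1,Q_2)$, and one checks that the shift drops out of the commutativity relation (since $P^T\mathrm{Adj}(P^T)$ is symmetric) but survives in the determinant identity in the form displayed.
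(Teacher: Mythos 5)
Your proposal is correct and follows essentially the same route as the paper: the paper derives the parameterization in the discussion preceding the theorem (using $d(\omega^2)=0$ to place $\omega\in A\otimes B$, the exactness of $\gamma-\tfrac{\lambda}{2}\delta$ to extract $(a,b,Q)$, $\gamma\wedge\omega=0$ for the commutativity relation, and $J_\gamma^2=-\id$ for \eqref{eqn:detP}), then defers the remaining verification verbatim to \cite[Theorem 1]{Madsen-Salamon}, exactly as you do. Your added remark that the $\tfrac{\lambda}{2}\mathrm{Adj}(P^T)$-shift drops out of the commutativity relation because $P^T\mathrm{Adj}(P^T)$ is symmetric is a correct and worthwhile detail that the paper leaves implicit.
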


With respect to the natural symplectic structure on $T^*V$ the map 
\begin{align*}
    \mu \colon V\oplus V &\to \mathfrak{so}(4) \cong \Omega^2\R^4 \\
    (A,B)& \mapsto [A,B]
\end{align*} defines a moment map for the Hamiltonian action of $\rm{SO}(4)$ on $T^*V$. The set $H_{\lambda,a,b}$ then lies in the kernel of $\mu$.

\begin{corr}\label{corr:modulispace}
    Modulo equivalent relations $\mathcal{H}_{\lambda,a,b}$ is a subset of the singular symplectic quotient 
    \begin{align*}
        \frac{\mu^{-1}(0)}{\rm{SO}(4)}\cong \frac{\R^3\oplus\R^3}{S_3}.
    \end{align*}
\end{corr}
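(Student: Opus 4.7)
The corollary contains two distinct assertions, which I would address in turn: first, that $\mathcal{H}_{\lambda,a,b}$ (modulo equivalence of $\SU(3)$-structures) embeds in $\mu^{-1}(0)/\SO(4)$, and second, that the singular symplectic quotient is isomorphic to $(\R^3\oplus\R^3)/S_3$.

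The first inclusion is immediate from Theorem \ref{thm:matrixdescr}. The commutativity constraint $Q^TP = P^TQ$ established there is precisely the vanishing of the matrix commutator on the space $V$ of real symmetric traceless $4\times 4$ matrices, once we invoke the isomorphism $M_{3\times 3}(\R)\cong V$ alluded to just before the statement. Hence $\mu(Q,P)=[Q,P]=0$, so every nearly half-flat pair lies in $\mu^{-1}(0)$. The natural equivalence relation on $\mathcal{H}_{\lambda,a,b}$, arising from changes of left-invariant frame compatible with the decomposition $T^*_{\id}M\cong A\oplus B$, is exactly the diagonal $\SO(4)$-action that defines the symplectic quotient, so one genuinely obtains a map to $\mu^{-1}(0)/\SO(4)$.

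For the second assertion I would argue by simultaneous diagonalization. Any commuting pair $(A,B)$ of real symmetric traceless $4\times 4$ matrices can be conjugated into diagonal form by an element of $\SO(4)$, and a diagonal traceless matrix is parameterized by three independent real numbers, so the reduced pair yields a point of $\R^3\oplus\R^3$. Under the $\SO(4)$-equivariant isomorphism $V\cong \R^3\otimes\R^3 = M_{3\times 3}(\R)$ afforded by $\SO(4)/\{\pm I\}\cong\SO(3)\times\SO(3)$, this simultaneous diagonalization translates into a simultaneous singular-value-like decomposition: there exist bases with respect to which both $P$ and $Q$ are diagonal, producing three pairs of real parameters. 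The residual gauge freedom after such a reduction is the Weyl group of a maximal torus of $\SO(3)$ acting on this simultaneous-diagonal data, which is $S_3$ permuting the three diagonal slots of $P$ and $Q$ in lockstep. This yields the claimed description $(\R^3\oplus\R^3)/S_3$.

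The main technical obstacle lies in the singular strata where some of the simultaneous singular values of $(P,Q)$ coincide. There the stabilizer of $(P,Q)$ in $\SO(4)$ jumps and the simultaneous diagonalization is no longer unique, which is precisely why the quotient is \emph{singular}. One must check that the extra stabilizers along these strata are already generated by the $S_3$-action on $\R^3\oplus\R^3$, so that the set-theoretic bijection at the generic stratum extends to a homeomorphism of singular symplectic quotients. This verification is entirely parallel to the half-flat situation of \cite{Madsen-Salamon}, since the $\lambda$-dependence enters $\mathcal{H}_{\lambda,a,b}$ only through the shifts \eqref{eqns:Q1Q2} and the normalization \eqref{eqn:detP} and not through the algebraic form of the moment map or the $\SO(4)$-action; thus the argument from the half-flat case transfers verbatim.
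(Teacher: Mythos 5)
Your overall strategy --- pass to $\mu^{-1}(0)$ via the commutator condition, simultaneously diagonalize, and identify the residual discrete symmetry --- is the intended one; the paper itself gives no proof and defers to Madsen--Salamon, so the comparison is really with that argument. The first half is essentially right, with one caveat worth flagging: under $V\cong\R^3\otimes\R^3$ the commutator $[Q,P]$ takes values in $\so(4)\cong\Lambda^2_+\oplus\Lambda^2_-$ and its two components are (up to identification) $Q^TP-P^TQ$ and $QP^T-PQ^T$. The single condition $Q^TP=P^TQ$ recorded in Theorem \ref{thm:matrixdescr} kills only one of these, so to place $(Q,P)$ in $\mu^{-1}(0)$ you must also invoke the second symmetry condition $QP^T=PQ^T$, which comes from the other component ($\Lambda^2A\otimes\Lambda^3B$) of $\gamma\wedge\omega=0$; ``immediate from Theorem \ref{thm:matrixdescr}'' skips this.

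The genuine gap is in the identification of the residual group. ``The Weyl group of a maximal torus of $\SO(3)$'' is $\Z_2$, not $S_3$, so that justification cannot stand as written; what is needed is the generalized Weyl group $N(\Sigma)/Z(\Sigma)$ of the diagonal section $\Sigma$ for the polar action $(g,h)\cdot X=gXh^T$ of $\SO(3)\times\SO(3)$ on $M_{3\times 3}(\R)$, equivalently of the conjugation action of $\SO(4)$ on $\mathrm{Sym}^2_0(\R^4)$. A direct computation shows this group is strictly larger than the $S_3$ of simultaneous permutations: for instance $(g,h)=(\mathrm{diag}(1,-1,-1),\mathrm{Id})\in\SO(3)\times\SO(3)$ preserves the diagonal slice and sends $\bigl(\mathrm{diag}(p_1,p_2,p_3),\mathrm{diag}(q_1,q_2,q_3)\bigr)$ to $\bigl(\mathrm{diag}(p_1,-p_2,-p_3),\mathrm{diag}(q_1,-q_2,-q_3)\bigr)$, which for generic entries is not in the $S_3$-orbit. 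The full residual group is $S_3\ltimes\Z_2^2\cong S_4$ (simultaneous permutations together with simultaneous even sign changes; equivalently the restricted Weyl group $S_4$ of the polar $\SO(4)$-representation acting on traceless diagonal $4\times4$ matrices in its standard representation). So the honest output of your argument is $\mu^{-1}(0)/\SO(4)\cong(\R^3\oplus\R^3)/(S_3\ltimes\Z_2^2)$, and reconciling this with the $S_3$ appearing in the statement requires an extra step --- either showing the sign changes act trivially on the data actually parameterizing $\mathcal{H}_{\lambda,a,b}$ (they do not in general, although they preserve $\det P$, $a$, $b$ and the constraints \eqref{eqns:Q1Q2}, \eqref{eqn:detP}), or tracking how the quotient is being presented in the half-flat source. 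Finally, note that the corollary only asserts a subset inclusion, so your closing paragraph about promoting a generic-stratum bijection to a homeomorphism of the singular quotients argues for more than is claimed and is not needed.
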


For describing the flow equations \eqref{SU3FORNG2} in this matrix framework we need to compute $J\gamma \in \mathcal{H}_{\lambda,a,b}$. To do this we make use of the following identity
\begin{align*}
    J\gamma(X,Y,Z)&=\gamma(JX,JY,JZ) = -\gamma(JX,Y,Z),
\end{align*} and can compute 
\begin{align*}
    J\gamma=&\frac{2}{\det{P}}\Bigl((a\tr(Q_1^TQ_2)-2\det Q_1-a^2b)e^{135}-(b\tr(Q_1^TQ_2)-2\det Q_2-ab^2)e^{246} \\
    &-\sum_{i,j=1}^3((ab+\tr{(Q_1^TQ_2)}) \ Q_1-2a \ {\textup{Adj}}(Q_2^T)-2Q_1Q_2^TQ_1)_{ij}de^{2i-1}\wedge e^{2j}\\
    &+\sum_{i,j=1}^3((ab+\tr{(Q_1^TQ_2)}) \ Q_2-2b \ {\textup{Adj}}(Q_1^T)-2Q_2Q_1^TQ_2)_{ij}e^{2i-1}\wedge de^{2j}\Bigl).
\end{align*}
We denote by \begin{align}
\begin{split}\label{eqns:ABR1R2}
A&\coloneqq a\tr(Q_1^TQ_2)-2\det Q_1-a^2b,\\
B&\coloneqq -(b\tr(Q_1^TQ_2)-2\det Q_2-ab^2),\\
    R_1&\coloneqq -((ab+\tr{(Q_1^TQ_2)}) \ Q_1-2a \ {\textup{Adj}}(Q_2^T)-2Q_1Q_2^TQ_1), \\
    R_2&\coloneqq (ab+\tr{(Q_1^TQ_2)}) \ Q_2-2b \ {\textup{Adj}}(Q_1^T)-2Q_2Q_1^TQ_2,      
\end{split}
\end{align} thus we can write 
\begin{align*}
    J\gamma&= \frac{2}{\det P}\Bigl(A e^{135}+B e^{246} +\sum_{i,j=1}^3\left((R_1)_{ij}de^{2i-1}\wedge e^{2j}
    +(R_2)_{ij}e^{2i-1}\wedge de^{2j}\right)\Bigl).
\end{align*}
One can also check that $\gamma\wedge J\gamma=2/3 \omega^3$ using \eqref{eqn:detP} and $J\gamma\wedge\omega=0$ which uses the fact that $P^TQ_i$ is symmetric for $i=1,2$. 

The $4$-form 
\begin{align*}
    d(J\gamma)&=\frac{2}{\det P}\sum_{i,j=1}^3 R_{ij}de^{2i-1}\wedge de^{2j},
\end{align*}
where \begin{align*}
    R= R_1+R_2=(ab+\tr{(Q_1^TQ_2)})(Q_2-Q_1)-2(b {\textup{Adj}}(Q_2^T)-a{\textup{Adj}}(Q_1^T))-2Q_2Q_1^TQ_2+2Q_1Q_2^TQ_1.
\end{align*}
\begin{rem}
    In \cite{Madsen-Salamon} the authors did similar computations when the $\SU(3)$-structure is half-flat or equivalently when $\lambda=0$ which implies $Q_1=-Q_2=Q$. Our results here matches that of in \cite{Madsen-Salamon} for $\lambda=0$. Also note that they used an isomorphism between the space of real $3\times 3$ matrices and the space of real symmetric trace-free $4\times 4$ matrices to simplify their expressions but in this case the isomorphism does not simplifies the computations or the expressions significantly and is thus not used in computations. 
    \end{rem}

The $6$-form $d\omega \wedge J\gamma=-dJ\gamma\wedge \omega =  \frac{2}{\det P}\tr(P^TR)\vol_6$. From \eqref{su3-torsionforNG2} this implies \begin{align*}
    w_1^+&= \frac{\tr(P^TR)}{2 (\det P)^2}.
\end{align*}
Thus we can rewrite \eqref{su3-torsionforNG2} as 
\begin{align*}
    \frac{\det P}{2} P_{ij}(de^{2i-1}\wedge e^{2j}-e^{2i-1}\wedge de^{2j}) =&\left(\frac{\tr(P^TR)}{4\det P}a+\frac{3\lambda}{4} A\right)e^{135}+\left(\frac{\tr(P^TR)}{4\det P}b+\frac{3\lambda}{4} B\right)e^{246}\\
   &+\left(\frac{\tr(P^TR)}{4\det P}Q_1+\frac{3\lambda}{4} R_1\right)_{ij}de^{2i-1}\wedge e^{2j}\\
   &+\left(\frac{\tr(P^TR)}{4\det P}Q_2+\frac{3\lambda}{4} R_2\right)_{ij}e^{2i-1}\wedge de^{2j} +\frac{\det P}{2} w_3,\\
   R_{ij}de^{2i-1}\wedge de^{2j}&= \frac{\tr(P^TR)}{3\det P}{\rm{Adj}}(P^T)_{ij}de^{2i-1}\wedge de^{2j}+w_2^-\wedge\omega,
\end{align*}
and make the following observations
\begin{prop} Let $(\omega,\gamma)\in \mathcal{H}_{\lambda,a,b}$. 
\begin{enumerate}[i)]
    \item If $w_1^+=w_3=w_2^-=0$, the $\SU(3)$-structure satisfies
\begin{align*}
    d\omega&= \frac{3\lambda}{4}J\gamma,\ \ \ 
    d\gamma= \frac{\lambda}{2}\omega^2. 
\end{align*}
Thus the nearly half-flat structure is nearly K\"ahler if  \begin{align*}
A &= B=0,\\
    R_1 &=-R_2= \frac{2\det P}{3\lambda}P.
\end{align*}
Note that $R=R_1+R_2$ so $R=0$ in this case.
\item The torsion form $w_1^+=0$ if and only if $\tr(P^TR)=0$. 

\item The torsion form $w_2^-=0$ that is the $\SU(3)$-structure is \textit{"nearly" co-coupled }if and only if
\begin{align*}
    R&=\frac{\tr(P^TR)}{3\det P}{\rm{Adj}}(P^T)
\end{align*}  
\item The torsion  form $w_3=0$ or the $\SU(3)$-structure is \textit{"nearly" coupled }if and only if
\begin{align*}
    A&=-\frac{\tr(P^TR)}{3\lambda\det P}a, \ \ \ B=-\frac{\tr(P^TR)}{3\lambda \det P}b,\\
    R_1&=\frac{1}{3\lambda}\left(2\det P P -\frac{\tr(P^TR)}{\det P}\ Q_1\right),\ \ \ R_2=-\frac{1}{3\lambda}\left(2\det P P +\frac{\tr(P^TR)}{\det P}\ Q_2\right).
\end{align*}
\end{enumerate}   
\end{prop}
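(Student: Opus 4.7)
The plan is to deduce each of the four statements by reading off coefficient identities in the two matrix equations displayed immediately before the proposition, which are nothing but the translations of $d\omega = w_1^+\gamma + \tfrac{3\lambda}{4}J\gamma + w_3$ and $dJ\gamma = -\tfrac{2}{3}w_1^+\omega^2 + w_2^-\wedge\omega$ into the $(P,Q,a,b,\lambda)$-parametrisation. The crucial structural input is the direct-sum decomposition
\begin{equation*}
\Omega^3M \cong \langle e^{135}\rangle\oplus(\Omega^2A\otimes B)\oplus(A\otimes\Omega^2B)\oplus\langle e^{246}\rangle,
\end{equation*}
together with the fact that $\Omega^4M = \Omega^2A\otimes\Omega^2B$ is freely spanned by the nine products $de^{2i-1}\wedge de^{2j}$. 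Because the map $\Omega^2_8\ni\beta\mapsto\beta\wedge\omega$ is injective and the summand-decomposition above is unique, each scalar or matrix equation in the proposition is equivalent to the vanishing or coincidence of a specific summand in one of the two matrix identities.

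Part (ii) is immediate from the formula $w_1^+ = \tr(P^TR)/\bigl(2(\det P)^2\bigr)$ already derived just above the proposition. Part (iii) follows by inspecting the second displayed equation,
\begin{equation*}
R_{ij}\,de^{2i-1}\wedge de^{2j} = \tfrac{\tr(P^TR)}{3\det P}\,\textup{Adj}(P^T)_{ij}\,de^{2i-1}\wedge de^{2j}+w_2^-\wedge\omega,
\end{equation*}
and observing that $w_2^-$ vanishes iff the two $\Omega^2A\otimes\Omega^2B$-terms cancel, i.e.\ $R = \tfrac{\tr(P^TR)}{3\det P}\,\textup{Adj}(P^T)$.

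For part (iv), setting $w_3 = 0$ in the first displayed equation leaves a right-hand side whose $e^{135}$ and $e^{246}$ coefficients must vanish (the left-hand side has none), producing the scalar identities for $A$ and $B$; matching the $\Omega^2A\otimes B$ and $A\otimes\Omega^2B$ components then delivers the stated formulas for $R_1$ and $R_2$. Part (i) is the further specialisation in which also $w_1^+ = 0$ and $w_2^- = 0$, collapsing the equation to $d\omega = \tfrac{3\lambda}{4}J\gamma$; the four components now yield $A = B = 0$ and $R_1 = -R_2 = \tfrac{2\det P}{3\lambda}P$, and consequently $R = R_1+R_2 = 0$, consistently with (ii).

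The main difficulty is purely bookkeeping: one must keep track of the different prefactors $\tfrac{\det P}{2}$, $\tfrac{3\lambda}{4}$, $\tfrac{2}{\det P}$ arising from the definitions of $\omega$, $J\gamma$ and $dJ\gamma$, and of the sign flip between the $(\Omega^2A\otimes B)$ and $(A\otimes\Omega^2B)$ components of $d\omega$ coming from $d(e^{2i-1}\wedge e^{2j}) = de^{2i-1}\wedge e^{2j} - e^{2i-1}\wedge de^{2j}$. No genuinely new algebraic identity beyond those recorded in \eqref{eqns:ABR1R2} and the explicit expression for $w_1^+$ is needed.
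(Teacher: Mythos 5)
Your proposal is correct and follows exactly the route the paper intends: the proposition is read off from the two displayed matrix identities preceding it (the translations of $d\omega$ and $dJ\gamma$ into the $(P,Q,a,b)$ framework), by comparing components in the decomposition $\Omega^3M\cong\Omega^3A\oplus(\Omega^2A\otimes B)\oplus(A\otimes\Omega^2B)\oplus\Omega^3B$ and using the formula $w_1^+=\tr(P^TR)/(2(\det P)^2)$ together with the injectivity of wedging with $\omega$ on $2$-forms. Your bookkeeping of the prefactors and signs matches the paper's displayed equations, so nothing is missing.
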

\medskip

We can now use the above algebraic framework to describe some examples of nearly half-flat $\SU(3)$-structures on $S^3\times S^3$.

\medskip

\noindent
\textbf{\textit{Nearly K\"ahler solution}}: If we assume $P=\rm{diag}(p_1,p_2,p_3)$, and $Q=\rm{diag}(q_1,q_2,q_3)$ the nearly half-flat structure given by $(a,b,P,Q)$ solves the nearly K\"ahler equations for a constant  $\lambda$ only when 
\begin{align}\label{eqn:nk-soln}
    (P,Q)&=\left(\pm\frac{4\sqrt{3}}{9\lambda^2} \rm{Id},0\right), \ \ \ a=b=\frac{16}{27\lambda^3}.
\end{align}

In the current framework the above solution represents the \textit{unique} $S^3\times S^3$-invariant  nearly K\"ahler solution ( compare with \cite[Proposition 3]{Madsen-Salamon})

\medskip

\noindent
\textbf{\textit{Examples of type $\mathcal{W}_1^++\mathcal{W}_1^-$}}:   
  The nearly half-flat structure has torsion contained in  $\mathcal{W}_1$ if and only if $R=-\frac{2}{3}w_1^+ {\rm{Adj}}(P^T)$. Furthermore if $w_1^+=0$ the structure becomes nearly K\"ahler which we have already seen above so we assume $w_1^+\neq 0$. Assuming $P=p{\rm{Id}},Q = q{\rm{Id}}$ we obtain the following solutions 
 for $|p|\in \left(0,\frac{4\sqrt{3}}{9\lambda^2}\right)$
\begin{align*}
    a=b=\lambda p^2, \ \ \ q=\pm \frac{p\sqrt{12\sqrt{3}|p|-27\lambda^2 p^2}}{6}.
\end{align*}

For the above solutions $w_1^+=\frac{\sqrt{3}q }{p^2}$. From \eqref{eqn:scalar-nhf} one can see that examples of this type has strictly positive scalar curvature, which for the above solution is given by 
\begin{align*}
    s&=\frac{10q^2}{p^4}+\frac{15\lambda^2}{8} =\frac{10\sqrt{3}}{3|p|}-\frac{45\lambda^2}{8}.
\end{align*}
\medskip
\noindent

\noindent
\textbf{\textit{Examples of type $\mathcal{W}_1^-+\mathcal{W}_3$}}: The nearly half-flat structure in this torsion form satisfy $dJ\gamma =0$.  For $\lambda=4$, $P=p{\rm{Id}},Q = q{\rm{Id}}$ and $a>\frac{1}{256}$ the following nearly half flat structure has $w_1^+=w_2^-=0$ 
\begin{align*}
    b=\frac{512a^2}{256a-1}, \ \ \  q=\frac{128a^2}{256a-1}, \ \ \ p=\pm 8a \sqrt{\frac{1}{256a-1}}
\end{align*}

\begin{rem}
    The above example with $w_1^+=w_2^-=0$ can be used to construct a $\G2 T$-structure in the sense of \cite[Proposition 4.2]{fino2024twisted} on $S^3\times S^3\times \R$.
\end{rem}
\medskip

\noindent
\textbf{\textit{Zero scalar curvature metric}} If we assume $b=a=0, P=p\rm{Id}$ and $Q=q\rm{Id}$ then the normalization condition \eqref{eqn:detP} becomes \begin{align*}
    3q^4-24q^2p^4+48p^8-p^6=0,
\end{align*}
which has the following solutions

\begin{align*}
    q=\pm\frac{p\sqrt{36p^2\pm 3\sqrt{3}p}}{3}.
\end{align*}
The nearly half-flat metric in this case is given by 
\begin{align*}
    g&=\frac{2(2p^2-q)^2}{p^2}e^{2i-1}\otimes e^{2i-1}+\frac{2(2p^2+q)^2}{p^2}e^{2i}\otimes e^{2i}+\frac{4p^4-q^2}{p^2}(e^{2i-1}\otimes e^{2i}+e^{2i}\otimes e^{2i-1}).
\end{align*}
One can compute the scalar curvature of the nearly half-flat structure metric by \eqref{eqn:scalar-nhf}. For $a=b=0$ and $q=\pm\frac{p\sqrt{36p^2+ 3\sqrt{3}p}}{3}$ the scalar curvature takes the form
\begin{align*}
    s&=\frac{2(72p^4+105p+5\sqrt{3})}{3p}.
\end{align*} There are two values of $p$ for which $s=0$ but for only one of them $q=\pm\frac{p\sqrt{36p^2+ 3\sqrt{3}p}}{3}\in\R$ hence we get one solution from this case. 

However for $q=\pm\frac{p\sqrt{36p^2-3\sqrt{3}p}}{3}$ the scalar curvature turns out to be \begin{align*}
    s&=\frac{2(72p^4+105p-5\sqrt{3})}{3p},
\end{align*} and both the solutions for $s=0$ are admissible.

\section{ The $S^3\times S^3$ evolution equations}\label{sec:evol-eqnsS3}
 We can now describe the flow equations for the nearly $\G2$-structure on $S^3\times S^3\times \R$ compatible with \eqref{g2-eqns} in this matrix framework. From  Proposition \ref{prop:su3tog2} we know that an invariant nearly half-flat structure $(\omega,\Omega)\in \mathcal{H}_{\lambda,a,b}$. on $S^3\times S^3$ evolve to a nearly parallel $\G2$-structure if and only if it satisfies \eqref{eqns:evolution}.

In terms of matrix $P,Q$ used to parameterize invariant nearly half-flat $\SU(3)$-structures on $S^3\times S^3$ the evolution equations take the following form where $Q_i$s and $A,B,R_i$s are defined in \eqref{eqns:Q1Q2} and \eqref{eqns:ABR1R2} respectively.

\begin{prop}
    The evolution equations for the flow $t\mapsto (P(t),Q(t))\in \mathcal{H}_{\lambda.a(t),b(t)}$ are given by
    \begin{align}
    \begin{split}\label{eqns:evolutionPQ}
        a'&=-\frac{2\lambda}{\det P} A, \ \ \ b'=-\frac{2\lambda}{\det P} B\\
        Q_1'&=-\frac{2\lambda}{\det P} R_1 +P\\
        Q_2'&=-\frac{2\lambda}{\det P} R_2 -P
    \end{split}     
    \end{align}
\end{prop}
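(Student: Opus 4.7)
The plan is to invoke Proposition \ref{prop:su3tog2}, which reduces the nearly parallel $\G2$-lift condition to the single identity
\begin{align*}
\gamma'(t) = d\omega(t) - \lambda J\gamma(t),
\end{align*}
and then to read off the four matrix equations by matching coefficients in the irreducible decomposition $\Omega^3 M \cong \Omega^3 A \oplus (\Omega^2 A\otimes B) \oplus (A\otimes \Omega^2 B) \oplus \Omega^3 B$ of invariant $3$-forms on $S^3\times S^3$.

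First I would differentiate
\begin{align*}
\gamma = ae^{135} + be^{246} + \sum_{i,j}(Q_1)_{ij}\,de^{2i-1}\wedge e^{2j} + \sum_{i,j}(Q_2)_{ij}\,e^{2i-1}\wedge de^{2j}
\end{align*}
in $t$; since the Maurer-Cartan structure constants on $S^3\times S^3$ are time-independent, this simply replaces $a, b, Q_1, Q_2$ by their $t$-derivatives. Next I would compute
\begin{align*}
d\omega = \sum_{i,j} P_{ij}\bigl(de^{2i-1}\wedge e^{2j} - e^{2i-1}\wedge de^{2j}\bigr),
\end{align*}
which contributes only to the middle two summands, with opposite signs. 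Finally I would use the explicit formula for $J\gamma$ already established in \textsection\ref{sec:setup_half_flat},
\begin{align*}
\lambda J\gamma = \frac{2\lambda}{\det P}\Bigl(A e^{135} + B e^{246} + \sum_{i,j}(R_1)_{ij}\,de^{2i-1}\wedge e^{2j} + \sum_{i,j}(R_2)_{ij}\,e^{2i-1}\wedge de^{2j}\Bigr).
\end{align*}

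Substituting these three expressions into $\gamma' = d\omega - \lambda J\gamma$ and matching coefficients in each of the four irreducible summands gives the stated equations directly: the $\Omega^3 A$ and $\Omega^3 B$ slots yield $a' = -\frac{2\lambda A}{\det P}$ and $b' = -\frac{2\lambda B}{\det P}$, while the $\Omega^2 A\otimes B$ and $A\otimes \Omega^2 B$ slots produce $Q_1' = P - \frac{2\lambda R_1}{\det P}$ and $Q_2' = -P - \frac{2\lambda R_2}{\det P}$. There is essentially no obstacle beyond sign bookkeeping in $d\omega$; the only point that warrants a sentence of justification is the linear independence of $\{de^{2i-1}\wedge e^{2j}\}_{i,j}$ and $\{e^{2i-1}\wedge de^{2j}\}_{i,j}$ inside their respective isotypic components, which follows from the fact that $\{de^1, de^3, de^5\}$ and $\{de^2, de^4, de^6\}$ are bases of $\Omega^2 A$ and $\Omega^2 B$ respectively (being, up to signs, the $2$-forms $e^{35}, e^{15}, e^{13}$ and $e^{46}, e^{26}, e^{24}$).
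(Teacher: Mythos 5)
Your proposal is correct and matches the paper's (implicit) argument exactly: the paper states this proposition without a written proof, relying on precisely the substitution you describe — differentiate $\gamma$ in the time-independent invariant coframe, insert the already-computed expressions for $d\omega$ and $J\gamma$, and match coefficients in the decomposition $\Lambda^3(A\oplus B)\cong\Lambda^3A\oplus(\Lambda^2A\otimes B)\oplus(A\otimes\Lambda^2B)\oplus\Lambda^3B$. All four resulting equations, including the signs on the $\pm P$ terms coming from $d(e^{2i-1}\wedge e^{2j})=de^{2i-1}\wedge e^{2j}-e^{2i-1}\wedge de^{2j}$, check out.
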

\begin{rem}
   In the half-flat case that is when $\lambda=0$ the parameters $a,b$ are constant in $t$ but in the nearly half-flat case the cohomology class $(a,b)$ evolves with time. 
\end{rem}

% In terms of $P,Q$ the evolution equations \eqref{eqns:evolutionPQ} can be re-written as 
% \begin{align*}
%         a'&=-\frac{2\lambda}{\det P} (a\tr(Q_1^TQ_2)-2\det Q_1-a^2b), \ \ \ b'=\frac{2\lambda}{\det P}(b\tr(Q_1^TQ_2)-2\det Q_2-ab^2),\\
%         Q_1'&=-\frac{2\lambda}{\det P}( -((ab+\tr{(Q_1^TQ_2)}) \ Q_1-2a \ {\textup{Adj}}(Q_2^T)-2Q_1Q_2^TQ_1)) +P\\
%         Q_2'&=-\frac{2\lambda}{\det P} ((ab+\tr{(Q_1^TQ_2)}) \ Q_2-2b \ {\textup{Adj}}(Q_1^T)-2Q_2Q_1^TQ_2) -P
% \end{align*}

\subsection{Dynamic examples}
Below we use the the matrix framework to describe some examples of nearly parallel $\G2$-structures on $S^3\times S^3\times I$ for $I\subset \R$ parameterised by $t$.  

\medskip

\noindent
\textbf{The homogeneous nearly $\G2$ metric on the Berger space:}
Let $B\coloneqq \frac{\rm{SO}(5)}{\rm{SO}(3)}$ be the Berger space. The homogeneous metric on $B$ has a nearly parallel $\G2$-structure. There is a cohomogeneity-one action of $\SO(4)$ on $B$ as first described by \cite{Verdiani-Podesta}. Under this action the principal orbits are hypersurfaces of $B$ isomorphic to $\rm{SO}(4)/\Z_2^2\cong \frac{S^3\times S^3}{\Z_2^3}$. The Lie group $\mathrm{SO}(3)$ is embedded into $\mathrm{SO}(5)$ via the $5$ dimensional irreducible representation of $\mathrm{SO}(3)$ on $\mathrm{Sym}^2_0(\R^3)$. If we denote by $S_{i,j}$ the symmetric $3\times 3$ matrix with 1 at the $(i,j)$ and $(j,i)$ entry and $0$ elsewhere then \begin{align*}
    E_1&:=\frac{\rm{diag}(1,1,-2)}{\sqrt{6}}, \quad
     E_2:=\frac{\rm{diag}(1,-1,0)}{\sqrt{2}}, \quad
    E_3:=S_{12}, \quad E_4:=S_{13},\quad E_5:=S_{23}
\end{align*} defines a basis of $\mathrm{Sym}^2_0(\R^3)\cong \R^5$. The embedding of $\rm{SO}(3)$ in $\SO(5)$ by the conjugate action of $\SO(3)$ on $\mathrm{Sym}^2_0(\R^3)\cong \R^5$.

The group $\SO(5)$ acts on $\R^5$ via the usual left multiplication. We can define the group $\rm{SO}(4)=\rm{SO}(4)_{E_1}\subset \rm{SO}(5)$ as the subgroup preserving the $E_1$ direction in $\R^5$. Thus there is an action of $\SO(4)$ on $\R^5$. The generic stabilizer group for the group $\SO(4)_{E_1}$ is also given by $\Z_2^2\cong \rm{diag}(1,1,ab,b,a)$ that preserves the $E_1,E_2$ directions in $\R^5$. The generic orbit is therefore given by $\rm{SO}(4)/\Z_2^2$. The stabilizer of the identity coset $x_-=\id.\rm{SO}(3)$ in $B$ is the group $K^-\cong \rm{O}(2)$ such that $K_0^- \cong \rm{SO}(2)$ acts by angle $2\theta$ in the $E_2,E_3$ plane and by angle $\theta$ in the $E_4,E_5$ plane. Thus $x_-.\rm{SO}(4)=\rm{SO}(4)/(\rm{SO}(2)\times \Z_2)$ is a singular orbit for the action. For the other singular orbit, we need to follow the geodesic $\gamma(t):=\cos(t)E_1+\sin(t)E_2$  transverse to all orbits. At $t=\pi/3$ we get the second singular orbit again isomorphic to $\rm{SO}(4)/(\rm{SO}(2)\times \Z_2)$. Thus the singular stabilizer groups for the action are both isomorphic to $S^1\times\Z_2$.  By some simple calculations one can compute that 
\begin{align*}
    \rm{Stab}(\gamma(t))&\cong\begin{cases}
    \mathbb{Z}_2\times\mathbb{Z}_2 & t\in(0,\pi/3),\\
    S(\mathrm O(2)\mathrm{O}(1)) & t=0, \\
     S(\mathrm O(1)\mathrm{O}(2)) & t=\pi/3.
    \end{cases}
    \end{align*}

With respect to the basis $\{e^1,e^2,e^3,e^4,e^5,e^6,dt\}$ of $\rm{SO}(4)/Z_2^2$ where $e^i$s are as described in \textsection\ref{sec:setup_half_flat} the nearly parallel $\G2$-structure satisfying $d\g2=\frac{6}{\sqrt{5}*_\g2\g2}$ is given by
\begin{align*}
    \g2=& \frac{1}{\sqrt{5}}(\sin(t)e^{12}+\sin(t-2\pi/3)e^{34}+\sin(t+2\pi/3) e^{56})\wedge dt -\frac{-7+2\cos(3t)}{20\sqrt{5}}e^{135}-\frac{7+2\cos(3t)}{20\sqrt{5}} e^{246}\\
    &+\frac{1}{5\sqrt{5}}\left(\cos(t)(e^{235}-e^{146})-3\sin(t-2\pi/3)\sin(t+2\pi/3)(e^{235}+e^{146})\right)\\
     &+\frac{1}{5\sqrt{5}}\left(\cos(t-2\pi/3)(e^{145}-e^{236})-3\sin(t)\sin(t+2\pi/3)(e^{145}+e^{236})\right)\\
      &+\frac{1}{5\sqrt{5}}\left(\cos(t+2\pi/3)(e^{136}-e^{245})-3\sin(t)\sin(t-2\pi/3)(e^{136}+e^{245})\right).
\end{align*}
In the matrix framework the nearly half-flat $\SU(3)$-structure on $\rm{SO}(4)/Z_2^2$ corresponding to the homogeneous nearly parallel $\G2$-structure for $\lambda=6/\sqrt{5}$ on $B$ can then be expressed as 
\begin{align*}
    a&=-\frac{-7+2\cos(3t)}{20\sqrt{5}}, \ \ \ b=-\frac{7+2\cos(3t)}{20\sqrt{5}}, \\
    P&=\frac{1}{\sqrt{5}}\rm{diag}\left(\sin(t),\sin(t-2\pi/3),\sin(t+2\pi/3\right),\\
    Q&=\frac{1}{5\sqrt{5}}\rm{diag}\left(\cos(t),\cos(t-2\pi/3),\cos(t+2\pi/3)\right).
\end{align*} 

\medskip

\noindent
\textbf{Sine-cone over $S^3\times S^3$:} If we assume $P=p(t)\rm{Id}, Q=q(t)\rm{Id}$ with $b(t)=a(t)$ and choose $\lambda=4$. The normalization condition takes the following form 
\[
48p^8(t) +(64a(t)-1)p^6(t)+24p^4(t)(a^2(t)-q^2(t))+48p^2(t)q^2(t)a(t)+3q^4(t)-6q^2(t)a^2(t)-a^4(t)=0
\]

Setting the coefficient of $e^{135}-e^{246}$ to zero in  $\gamma'-d\omega+4J\gamma$ we get 
\begin{align*}
    -\frac{16(a(t)-4p^2(t))((a(t)+2p^2(t))^2+3q^2(t))}{p^3(t)}=0
\end{align*}
which implies either $a(t)=4p^2(t)$ or if $(a(t)+2p^2(t))^2+3q^2(t)=0$ but the later solution solves the normalization condition if and only if $p(t)=0$ and can be discarded. Substituting $a(t)=4p^2(t)$ in the normalization condition generates four possible solutions 
\begin{align*}
    q(t)&=\pm\frac{p(t)\sqrt{-108p(t)^2\pm 3\sqrt{3}p(t)}}{3}.
\end{align*}
Equating all the coefficients in $\gamma'-d\omega+4J\gamma$ to zero give the following set of ODEs 
\begin{align*}
    p'(t)p^4(t)&=24p^4(t)q(t)+2q^3(t),\\
    q'(t)p(t)&=p^2(t)-48q^2(t)-576p^4(t).
    \end{align*}
Substituting $q(t)$ in terms of $p(t)$ in the above equations we get either 
\begin{align*}
    p(t)&=\pm\frac{\sqrt{3}}{72}(1+\sin(4t+c))\\
    q(t)&=\pm\frac{\sqrt{3}}{864}|\cos(4t+c)|(1+\sin(4t+c)),
\end{align*}
or 
\begin{align*}
    p(t)&=\pm\frac{\sqrt{3}}{72}(-1+\sin(4t+c))\\
    q(t)&=\pm\frac{\sqrt{3}}{864}|\cos(4t+c)|(-1+\sin(4t+c)).
\end{align*}

If we now assume the nearly half-flat structure at $t=0$ to be the unique nearly K\"ahler solution presented in  \eqref{eqn:nk-soln} we get that the following solutions to the evolution equations
\begin{align*}
a(t)=b(t)&=\frac{\cos^4(2t)}{108},\ \ \ 
    p(t)=\frac{\sqrt{3}}{36}\cos^2(2t),\ \ \
    q(t)=\frac{\sqrt{3}}{216}\cos^3(2t)\sin(2t),
\end{align*}
or
\begin{align*}
a(t)=b(t)&=\frac{\cos^4(2t)}{108},\ \ \ 
    p(t)=-\frac{\sqrt{3}}{36}\cos^2(2t),\ \ \ 
    q(t)=-\frac{\sqrt{3}}{216}\cos^3(2t)\sin(2t).
\end{align*}

If we denote by $g_{NK}$, the metric induced by the nearly K\"ahler $\SU(3)$-structure, the metric $g_6(t)$ corresponding to the above nearly half-flat structure at any time $t$ is given by
\begin{align*}
    g_6(t)&= \frac{\cos^2(2t)}{18}\sum_{i=1}^3((e^{2i-1})^2+(e^{2i})^2-\frac{1}{2}e^{2i-1}\otimes e^{2i}-\frac{1}{2}e^{2i}\otimes e^{2i-1})= \cos^2(2t)g_{NK}.
\end{align*}

From \eqref{g2-eqns} the nearly $\G2$-structure is given by
\begin{align*}
    \g2=&\frac{\sqrt{3}\cos^2(2t)}{36}(e^{12}+e^{34}+e^{56})\wedge dt +\frac{\cos^4(2t)}{108}(e^{135}+e^{246})\\
   &+ \frac{\cos^3(2t)\cos(2t-2\pi/3)}{108}(e^{136}+e^{145}+e^{235})+\frac{\cos^3(2t)\cos(2t+2\pi/3)}{108}(e^{146}+e^{236}+e^{245})
\end{align*}

Reparametrizing $s=2t+\pi/2$, the $\G2$-metric $g_\g2(s)$ corresponding to the above $\G2$-structure is (up-to a scale) given by
\begin{align*}
    g_\g2(s)&= \sin^2(s)g_{NK}+(ds)^2,
\end{align*} which is the well-known sine-cone nearly $\G2$ metric over the nearly K\"ahler $\SU(3)$-structure on $S^3\times S^3$ (see \cite{Acharya:2003ii}).

Note that the above solution is incomplete as at $s=0,\pi$ the metric shrinks to a point and becomes highly non-singular. At any time $s\in (0,\pi)$ the only non-vanishing torsion form for the nearly half-flat structure is $w_1^+=6\cot(s)$ which vanishes only at $s=\pi/2$. 

\medskip

One can also explicitly write down other known examples of nearly parallel $\G2$-structures on $S^3\times S^3\times I$ such as the homogeneous nearly parallel $\G2$-structure on $S^7$. The Lie group SO(4) acts by cohomogeneity-one on $S^7 \subset \R^8$. The action of $\SO(4)$ on $\R^8$ is defined by the isotropy action on the tangent space of $\G2/\SO(4)$. As a complex representation of $\SU(2) \times \SU(2)$ if $V_{(k,l)}$ represents the tensor product of the symmetric representation of weight $k, l$ on the first and second $\SU(2)$ factor
respectively, the space $\R^8$ can be written as $V_{(1,0)} \otimes V_{(0,3)}$. Moreover in \cite{wilking-3sasaki} the authors showed that the total spaces of the  $\SO(3)$-bundles of self dual (anti-self dual) $2$-forms over Hitchin's self dual Einstein orbifolds \cite{Hitchin_Einstein} are smooth $3$-Sasakian seven dimensional manifolds. The action of $\SO(3)$ on the base lifts to form a cohomogeneity-one $\SO(3)\times  \SO(3)$-action on the total space. One can describe the 2-parameter family of nearly parallel $\G2$-structures induced by this $3$-Sasakian structure using the matrix framework. 

Apart from this, one can also use this framework to study $\G2$-instantons on $S^3\times S^3\times I$ with respect to the $\G2$-structure $(\g2,\psi)$ defined in \eqref{eq:ng2defn}. In \cite[Lemma 1]{Lotay2018-G_2instantonsON} the authors showed that if $\psi$ is closed then $\G2$-instantons on $S^3\times S^3\times I$ are in one-to-one correspondence with a 1-parameter family of connections $a(t)_{t\in I}$ with curvature $F_a(t)$ on $S^3\times S^3$  that satisfies
\begin{align*}
    \dot{a}\wedge\frac{\omega^2}{2}-F_a\wedge J\gamma &=0, 
\end{align*}
along with the constraint $F_a\wedge \omega^2=0$, which is shown to be compatible with the evolution. Previously in \cite{lotay-oliveira} the authors used similar framework to describe the $\SU(2)^2$-invariant $\G2$-instantons on non-compact manifolds with holonomy $\G2$. A similar analysis can be done when the $\G2$-structure is nearly parallel.

\phantomsection
\addcontentsline{toc}{section}{References}

\medskip

\bibliographystyle{amsalpha}
\bibliography{main}

@article {Fabian-thesis,
    AUTHOR = {Schulte-Hengesbach, Fabian},
    TITLE = {Half-flat structures on lie groups},
   JOURNAL = {J. Lond. Math. Soc. (2)},
  FJOURNAL = {PhD Thesis},
      YEAR = {2010},
       URL = {	https://ediss.sub.uni-hamburg.de/handle/ediss/3670},
}

@misc{fino2024twisted,
      title={The twisted G$_2$ equation for strong G$_2$-structures with torsion}, 
      author={Anna Fino and Lucía Martín-Merchán and Alberto Raffero},
      year={2024},
      eprint={2306.07128},
      archivePrefix={arXiv},
      primaryClass={math.DG}
}

@article{Acharya:2003ii,
    author = "Acharya, Bobby Samir and Denef, F. and Hofman, C. and Lambert, N.",
    title = "{Freund-Rubin revisited}",
    eprint = "hep-th/0308046",
    archivePrefix = "arXiv",
    reportNumber = "RUNHETC-2003-24, CU-TP-1092",
    year = "2003"
}

@article{BEHRNDT200499,
title = {Fluxes in M-theory on 7-manifolds: G-structures and superpotential},
journal = {Nuclear Physics B},
volume = {694},
number = {1},
pages = {99-114},
year = {2004},
issn = {0550-3213},
doi = {https://doi.org/10.1016/j.nuclphysb.2004.06.028},
url = {https://www.sciencedirect.com/science/article/pii/S0550321304004286},
author = {Klaus Behrndt and Claus Jeschek},
abstract = {We consider compactifications of M-theory on 7-manifolds in the presence of 4-form fluxes, which leave at least four supercharges unbroken. We focus especially on the case, where the fluxes on the 7-manifold are consistent with SU(3) singlet spinors and hence appear as specific SU(3) structures. We derive the constraints on the fluxes imposed by supersymmetry and determine the flux components that fix the resulting 4-dimensional cosmological constant (or superpotential).}
}

@inproceedings{Chiossi-Simon,
    author = "Chiossi, Simon and Salamon, Simon",
    title = "{The Intrinsic torsion of SU(3) and $\rm{G}_2$ structures}",
    booktitle = "{International Conference on Differential Geometry held in honor of the 60th Birthday of A.M. Naveira}",
    eprint = "math/0202282",
    archivePrefix = "arXiv",
    year = "2002"
}

@article{conti-salamon,
author = {Conti, Diego and Salamon, Simon},
title = {REDUCED HOLONOMY, HYPERSURFACES AND EXTENSIONS},
journal = {International Journal of Geometric Methods in Modern Physics},
volume = {03},
number = {05n06},
pages = {899-912},
year = {2006},
doi = {10.1142/S021988780600148X},

URL = { 
    
        https://doi.org/10.1142/S021988780600148X
    
    

},
eprint = { 
    
        https://doi.org/10.1142/S021988780600148X
    
    

}
,
    abstract = { We study the geometry of hypersurfaces in manifolds with Ricci-flat holonomy group, on which we introduce a G-structure whose intrinsic torsion can be identified with the second fundamental form. The general problem of extending a manifold with such a G-structure so as to invert this construction is open, but results exist in particular cases, which we review. We list the five-dimensional nilmanifolds carrying invariant SU(2)-structures of this type, and present an example of an associated metric with holonomy SU(3). }
}

@article{cvlt-liftnhf,
author = {Cortés, Vicente and Leistner, Thomas and Schäfer, Lars and Schulte-Hengesbach, Fabian},
year = {2009},
pages = {},
title = {Half-flat Structures and Special Holonomy},
volume = {102},
journal = {Proceedings of the London Mathematical Society},
doi = {10.1112/plms/pdq012}
}

@article{DWIVEDI2019253,
title = {Minimal hypersurfaces in nearly $\rm{G}_2$ manifolds},
journal = {Journal of Geometry and Physics},
volume = {135},
pages = {253-264},
year = {2019},
issn = {0393-0440},
doi = {https://doi.org/10.1016/j.geomphys.2018.10.007},
url = {https://www.sciencedirect.com/science/article/pii/S0393044018303115},
author = {Shubham Dwivedi},
keywords = { structures, Nearly  manifolds, Nearly Kähler, Minimal hypersurfaces},
abstract = {We study hypersurfaces in a nearly$\rm{G}_2$ manifold. We define various quantities associated to such a hypersurface using the$\rm{G}_2$ structure of the ambient manifold and prove several relationships between them. In particular, we give a necessary and sufficient condition for a hypersurface with an almost complex structure induced from the$\rm{G}_2$ structure of the ambient manifold, to be nearly Kähler. Then using the nearly$\rm{G}_2$ structure on the round sphere S7, we prove that for a compact minimal hypersurface M6 of constant scalar curvature in S7 with the shape operator A satisfying |A|2>6, there exists an eigenvalue λ>12 of the Laplace operator on M such that |A|2=λ−6, thus giving the next discrete value of |A|2 greater than 0 and 6, thus generalizing the result of Deshmukh (2010) about nearly Kähler S6.}
}

@article{CHIOSSI-Swann,
title = {$\rm{G}_2$-structures with torsion from half-integrable nilmanifolds},
journal = {Journal of Geometry and Physics},
volume = {54},
number = {3},
pages = {262-285},
year = {2005},
issn = {0393-0440},
doi = {https://doi.org/10.1016/j.geomphys.2004.09.009},
url = {https://www.sciencedirect.com/science/article/pii/S0393044004001391},
author = {Simon G. Chiossi and Andrew Swann},
keywords = {-structure, Nilmanifold, Torsion connection, Half-integrable -structure},
abstract = {The equations for a$\rm{G}_2$-structure with torsion on a product M7=N6×S1 are studied in relation to the induced SU(3)-structure on N6. All solutions are found in the case when the Lee-form of the$\rm{G}_2$-structure is non-zero and N6 is a six-dimensional nilmanifold with half-integrable SU(3)-structure. Special properties of the torsion of these solutions are discussed.}
}

@article {Cabrera-hypersurface,
    AUTHOR = {Cabrera, Francisco Mart\'{\i}n},
     TITLE = {{${\rm SU}(3)$}-structures on hypersurfaces of manifolds with
              {$G_2$}-structure},
   JOURNAL = {Monatsh. Math.},
  FJOURNAL = {Monatshefte f\"{u}r Mathematik},
    VOLUME = {148},
      YEAR = {2006},
    NUMBER = {1},
     PAGES = {29--50},
      ISSN = {0026-9255,1436-5081},
   MRCLASS = {53C10 (53C29)},
  MRNUMBER = {2229065},
MRREVIEWER = {Simon\ G.\ Chiossi},
       DOI = {10.1007/s00605-005-0343-y},
       URL = {https://doi.org/10.1007/s00605-005-0343-y},
}

@article{Calabi-hypersurface,
 ISSN = {00029947},
 URL = {http://www.jstor.org/stable/1993108},
 author = {Eugenio Calabi},
 journal = {Transactions of the American Mathematical Society},
 number = {2},
 pages = {407--438},
 publisher = {American Mathematical Society},
 title = {Construction and Properties of Some 6-Dimensional Almost Complex Manifolds},
 urldate = {2023-09-21},
 volume = {87},
 year = {1958}
}

@article {Conti-embedding,
    AUTHOR = {Conti, Diego},
     TITLE = {Embedding into manifolds with torsion},
   JOURNAL = {Math. Z.},
  FJOURNAL = {Mathematische Zeitschrift},
    VOLUME = {268},
      YEAR = {2011},
    NUMBER = {3-4},
     PAGES = {725--751},
      ISSN = {0025-5874,1432-1823},
   MRCLASS = {53C10 (53C25 53C27 53C29 53C38)},
  MRNUMBER = {2818726},
MRREVIEWER = {Andrew\ Swann},
       DOI = {10.1007/s00209-010-0692-7},
       URL = {https://doi.org/10.1007/s00209-010-0692-7},
}

@incollection {Bryant-non-embedding,
    AUTHOR = {Bryant, Robert L.},
     TITLE = {Non-embedding and non-extension results in special holonomy},
 BOOKTITLE = {The many facets of geometry},
     PAGES = {346--367},
 PUBLISHER = {Oxford Univ. Press, Oxford},
      YEAR = {2010},
      ISBN = {978-0-19-953492-0},
   MRCLASS = {53C29},
  MRNUMBER = {2681703},
MRREVIEWER = {Lorenz\ J.\ Schwachh\"{o}fer},
       DOI = {10.1093/acprof:oso/9780199534920.003.0017},
       URL = {https://doi.org/10.1093/acprof:oso/9780199534920.003.0017},
}

@article{Cleyton-swann,
    author = "Cleyton, Richard and Swann, Andrew",
    title = "{Cohomogeneity one $\rm{G}_2$ structures}",
    eprint = "math/0111056",
    archivePrefix = "arXiv",
    doi = "10.1016/S0393-0440(02)00074-8",
    journal = "J. Geom. Phys.",
    volume = "44",
    pages = "202--220",
    year = "2002"
}

@article{freund-supergravity,
title = {Dynamics of dimensional reduction},
journal = {Physics Letters B},
volume = {97},
number = {2},
pages = {233-235},
year = {1980},
issn = {0370-2693},
doi = {https://doi.org/10.1016/0370-2693(80)90590-0},
url = {https://www.sciencedirect.com/science/article/pii/0370269380905900},
author = {Peter G.O. Freund and Mark A. Rubin},
abstract = {In d-dimensional unified theories that, along with gravity, contain an antisymmetric tensor field of rank s-1, preferential compactification of d-s or of s space-like dimensions is found to occur. This is the case in 11-dimensional supergravity where s = 4.}
}

@article{podesta_2021, 
title={Nearly Parallel $\rm{G}_2$-structures with Large Symmetry Group}, volume={73},
DOI={10.4153/S0008414X19000634}, 
number={2},
journal={Canadian Journal of Mathematics}, 
publisher={Canadian Mathematical Society}, 
author={Podestà, Fabio}, 
year={2021},
pages={339–359}
}

@article{BILAL2002112,
title = {(Weak) $\rm{G}_2$ holonomy from self-duality, flux and supersymmetry},
journal = {Nuclear Physics B},
volume = {628},
number = {1},
pages = {112-132},
year = {2002},
issn = {0550-3213},
doi = {https://doi.org/10.1016/S0550-3213(02)00042-1},
url = {https://www.sciencedirect.com/science/article/pii/S0550321302000421},
author = {Adel Bilal and Jean-Pierre Derendinger and Konstadinos Sfetsos},
abstract = {The aim of this paper is two-fold. First, we provide a simple and pedagogical discussion of how compactifications of M-theory or supergravity preserving some four-dimensional supersymmetry naturally lead to reduced holonomy or its generalization, reduced weak holonomy. We relate the existence of a (conformal) Killing spinor to the existence of certain closed and co-closed p-forms, and to the metric being Ricci flat or Einstein. Then, for seven-dimensional manifolds, we show that octonionic self-duality conditions on the spin connection are equivalent to$\rm{G}_2$ holonomy and certain generalized self-duality conditions to weak$\rm{G}_2$ holonomy. The latter lift to self-duality conditions for cohomogeneity-one spin(7) metrics. To illustrate the power of this approach, we present several examples where the self-duality condition largely simplifies the derivation of a$\rm{G}_2$ or weak$\rm{G}_2$ metric.}
}

@article{BILAL2003343,
title = {Compact weak $\rm{G}_2$-manifolds with conical singularities},
journal = {Nuclear Physics B},
volume = {663},
number = {1},
pages = {343-364},
year = {2003},
issn = {0550-3213},
doi = {https://doi.org/10.1016/S0550-3213(03)00388-2},
url = {https://www.sciencedirect.com/science/article/pii/S0550321303003882},
author = {Adel Bilal and Steffen Metzger},
abstract = {We construct 7-dimensional compact Einstein spaces with conical singularities that preserve 1/8 of the supersymmetries of M-theory. Mathematically they have weak$\rm{G}_2$-holonomy. We show that for every non-compact$\rm{G}_2$-holonomy manifold which is asymptotic to a cone on a 6-manifold Y, there is a corresponding weak$\rm{G}_2$-manifold with two conical singularities which, close to the singularities, looks like a cone on Y. Our construction provides explicit metrics on these weak$\rm{G}_2$-manifolds. We completely determine the cohomology of these manifolds in terms of the cohomology of Y.}
}

@article {singhal-instanton,
    AUTHOR = {Singhal, Ragini},
     TITLE = {Deformations of {$\rm{G}_2$}-instantons on nearly {$\rm{G}_2$}
              manifolds},
   JOURNAL = {Ann. Global Anal. Geom.},
  FJOURNAL = {Annals of Global Analysis and Geometry},
    VOLUME = {62},
      YEAR = {2022},
    NUMBER = {2},
     PAGES = {329--366},
      ISSN = {0232-704X,1572-9060},
   MRCLASS = {53C29 (53C07)},
  MRNUMBER = {4467313},
MRREVIEWER = {Andrew\ Swann},
       DOI = {10.1007/s10455-022-09853-1},
       URL = {https://doi.org/10.1007/s10455-022-09853-1},
}

@article {classification_G2,
    AUTHOR = {Cabrera, F. M. and Monar, M. D. and Swann, A. F.},
     TITLE = {Classification of {$G_2$}-structures},
   JOURNAL = {J. London Math. Soc. (2)},
  FJOURNAL = {Journal of the London Mathematical Society. Second Series},
    VOLUME = {53},
      YEAR = {1996},
    NUMBER = {2},
     PAGES = {407--416},
      ISSN = {0024-6107,1469-7750},
   MRCLASS = {53C10 (53C30)},
  MRNUMBER = {1373070},
MRREVIEWER = {McKenzie\ Y.\ Wang},
       DOI = {10.1112/jlms/53.2.407},
       URL = {https://doi.org/10.1112/jlms/53.2.407},
}

@article {Fernandez-Gray,
    AUTHOR = {Fern\'{a}ndez, M. and Gray, A.},
     TITLE = {Riemannian manifolds with structure group {$G\sb{2}$}},
   JOURNAL = {Ann. Mat. Pura Appl. (4)},
  FJOURNAL = {Annali di Matematica Pura ed Applicata. Serie Quarta},
    VOLUME = {132},
      YEAR = {1982},
     PAGES = {19--45},
      ISSN = {0003-4622},
   MRCLASS = {53C25},
  MRNUMBER = {696037},
MRREVIEWER = {Hidekiyo\ Wakakuwa},
       DOI = {10.1007/BF01760975},
       URL = {https://doi.org/10.1007/BF01760975},
}

@article {Gray-hypersurface,
    AUTHOR = {Gray, Alfred},
     TITLE = {Some examples of almost {H}ermitian manifolds},
   JOURNAL = {Illinois J. Math.},
  FJOURNAL = {Illinois Journal of Mathematics},
    VOLUME = {10},
      YEAR = {1966},
     PAGES = {353--366},
      ISSN = {0019-2082},
   MRCLASS = {53.80},
  MRNUMBER = {190879},
MRREVIEWER = {N.\ J.\ Hicks},
       URL = {http://projecteuclid.org/euclid.ijm/1256055115},
}

@article {Einstein-warpedg2,
    AUTHOR = {Manero, V\'{\i}ctor and Ugarte, Luis},
     TITLE = {Einstein warped {${\rm G}_2$} and {${\rm Spin}(7)$} manifolds},
   JOURNAL = {Comm. Math. Phys.},
  FJOURNAL = {Communications in Mathematical Physics},
    VOLUME = {369},
      YEAR = {2019},
    NUMBER = {2},
     PAGES = {637--673},
      ISSN = {0010-3616,1432-0916},
   MRCLASS = {53C25 (53C29)},
  MRNUMBER = {3962005},
MRREVIEWER = {Jason\ Dean\ Lotay},
       DOI = {10.1007/s00220-019-03355-1},
       URL = {https://doi.org/10.1007/s00220-019-03355-1},
}

@article {Hitchin-3forms,
    AUTHOR = {Hitchin, Nigel},
     TITLE = {The geometry of three-forms in six dimensions},
   JOURNAL = {J. Differential Geom.},
  FJOURNAL = {Journal of Differential Geometry},
    VOLUME = {55},
      YEAR = {2000},
    NUMBER = {3},
     PAGES = {547--576},
      ISSN = {0022-040X,1945-743X},
   MRCLASS = {53C25 (32G13 53C65 58A10 58E11)},
  MRNUMBER = {1863733},
MRREVIEWER = {Matthew\ B.\ Stenzel},
       URL = {http://projecteuclid.org/euclid.jdg/1090341263},
}

@incollection {Hitchin_Einstein,
    AUTHOR = {Hitchin, N. J.},
     TITLE = {A new family of {E}instein metrics},
 BOOKTITLE = {Manifolds and geometry ({P}isa, 1993)},
    SERIES = {Sympos. Math., XXXVI},
     PAGES = {190--222},
 PUBLISHER = {Cambridge Univ. Press, Cambridge},
      YEAR = {1996},
   MRCLASS = {53C25 (32L25)},
  MRNUMBER = {1410073},
MRREVIEWER = {Andrew Dancer},
}

@article {Verdiani-Podesta,
    AUTHOR = {Podest\`a, F. and Verdiani, L.},
     TITLE = {Positively curved {$7$}-dimensional manifolds},
   JOURNAL = {Quart. J. Math. Oxford Ser. (2)},
  FJOURNAL = {The Quarterly Journal of Mathematics. Oxford. Second Series},
    VOLUME = {50},
      YEAR = {1999},
    NUMBER = {200},
     PAGES = {497--504},
      ISSN = {0033-5606},
   MRCLASS = {53C20 (53C21)},
  MRNUMBER = {1726790},
MRREVIEWER = {Eric Boeckx},
       DOI = {10.1093/qjmath/50.200.497},
       URL = {https://doi.org/10.1093/qjmath/50.200.497},
}

@article {liftingsu3tonhf,
    AUTHOR = {Stock, Sebastian},
     TITLE = {Lifting {${\rm SU}(3)$}-structures to nearly parallel
              {$G_2$}-structures},
   JOURNAL = {J. Geom. Phys.},
  FJOURNAL = {Journal of Geometry and Physics},
    VOLUME = {59},
      YEAR = {2009},
    NUMBER = {1},
     PAGES = {1--7},
      ISSN = {0393-0440,1879-1662},
   MRCLASS = {53C10 (53C25 53C27 53C29)},
  MRNUMBER = {2479256},
MRREVIEWER = {Francisco\ Mart\'{\i}n Cabrera},
       DOI = {10.1016/j.geomphys.2008.08.003},
       URL = {https://doi.org/10.1016/j.geomphys.2008.08.003},
}

@article{Kawai2016SecondOrderDO,
  title={Second-Order deformations of associative submanifolds in nearly parallel $\rm{G}_2$-manifolds},
  author={Kotaro Kawai},
  journal={Quarterly Journal of Mathematics},
  year={2016},
  volume={69},
  pages={241-270},
  url={https://api.semanticscholar.org/CorpusID:119706702}
}

@article {lotay-oliveira,
    AUTHOR = {Lotay, Jason D. and Oliveira, Goncalo},
     TITLE = {{$\rm SU(2)^2$}-invariant {$\rm{G}_2$}-instantons},
   JOURNAL = {Math. Ann.},
  FJOURNAL = {Mathematische Annalen},
    VOLUME = {371},
      YEAR = {2018},
    NUMBER = {1-2},
     PAGES = {961--1011},
      ISSN = {0025-5831,1432-1807},
   MRCLASS = {53C07 (53C25)},
  MRNUMBER = {3788869},
MRREVIEWER = {Derek\ G.\ Harland},
       DOI = {10.1007/s00208-017-1636-x},
       URL = {https://doi.org/10.1007/s00208-017-1636-x},
}

@article{Lotay2018-G_2instantonsON,
  title={$\rm{G}_2$-instantons on noncompact $\rm{G}_2$-manifolds: results and open problems},
  author={Jason D. Lotay and Gonçalo Oliveira},
  journal={arXiv:1812.11867 Differential Geometry},
  year={2018},
  url={https://arxiv.org/abs/1812.11867}
}

@article{wilking-3sasaki,
author = {Karsten Grove and Burkhard Wilking and Wolfgang Ziller},
title = {{Positively curved cohomogeneity one manifolds and 3-Sasakian geometry}},
volume = {78},
journal = {Journal of Differential Geometry},
number = {1},
publisher = {Lehigh University},
pages = {33 -- 111},
year = {2008},
doi = {10.4310/jdg/1197320603},
URL = {https://doi.org/10.4310/jdg/1197320603}
}

@misc{Ball-Madnick,
  doi = {10.48550/ARXIV.2003.13169},
  
  url = {https://arxiv.org/abs/2003.13169},
  
  author = {Ball, Gavin and Madnick, Jesse},
  
  keywords = {Differential Geometry (math.DG), FOS: Mathematics, FOS: Mathematics, 53B20, 53B25, 53C25, 53C30, 53C38, 53C40},
  
  title = {Associative Submanifolds of the {B}erger Space},
  
  publisher = {arXiv:2003.13169},
  
  year = {2020},
  
  copyright = {arXiv.org perpetual, non-exclusive license}
}

@article {nearlyhypo-nhf,
    AUTHOR = {Fern\'{a}ndez, Marisa and Ivanov, Stefan and Mu\~{n}oz,
              Vicente and Ugarte, Luis},
     TITLE = {Nearly hypo structures and compact nearly {K}\"{a}hler
              6-manifolds with conical singularities},
   JOURNAL = {J. Lond. Math. Soc. (2)},
  FJOURNAL = {Journal of the London Mathematical Society. Second Series},
    VOLUME = {78},
      YEAR = {2008},
    NUMBER = {3},
     PAGES = {580--604},
      ISSN = {0024-6107,1469-7750},
   MRCLASS = {53C15 (53C25)},
  MRNUMBER = {2456893},
MRREVIEWER = {Simon\ G.\ Chiossi},
       DOI = {10.1112/jlms/jdn044},
       URL = {https://doi.org/10.1112/jlms/jdn044},
}

@article {spinorial-agricola,
    AUTHOR = {Agricola, Ilka and Chiossi, Simon G. and Friedrich, Thomas and
              H\"{o}ll, Jos},
     TITLE = {Spinorial description of {${\rm SU}(3)$}- and {${\rm
              G}_2$}-manifolds},
   JOURNAL = {J. Geom. Phys.},
  FJOURNAL = {Journal of Geometry and Physics},
    VOLUME = {98},
      YEAR = {2015},
     PAGES = {535--555},
      ISSN = {0393-0440,1879-1662},
   MRCLASS = {53C10 (53C25 53C27 53C29 53C80)},
  MRNUMBER = {3414976},
MRREVIEWER = {Marco\ Freibert},
       DOI = {10.1016/j.geomphys.2015.08.023},
       URL = {https://doi.org/10.1016/j.geomphys.2015.08.023},
}

@article{Gemmer2011YangMillsIO,
  title={Yang-Mills instantons on cones and sine-cones over nearly K{\"a}hler manifolds},
  author={K Gemmer and Olaf Lechtenfeld and Christoph N{\"o}lle and Alexander D. Popov},
  journal={Journal of High Energy Physics},
  year={2011},
  volume={2011},
  pages={1-25},
  url={https://api.semanticscholar.org/CorpusID:26487031}
}

@ARTICLE{dwivedi2020deformation,
       author = {{Dwivedi}, Shubham and {Singhal}, Ragini},
        title = "{Deformation theory of nearly $\mathrm{G}_2$ manifolds}",
      Archiveprefix = {arXiv},
     keywords = {Mathematics - Differential Geometry},
         year = 2020,
        month = jul,
          eid = {arXiv:2007.02497},
        pages = {arXiv:2007.02497},
archivePrefix = {arXiv},
       eprint = {2007.02497},
 primaryClass = {math.DG},
       adsurl = {https://ui.adsabs.harvard.edu/abs/2020arXiv200702497D},
      adsnote = {Provided by the SAO/NASA Astrophysics Data System},
      Journal={to appear in Communications in Analysis and Geometry}
}

@article{Galicki2,
	Author = {Boyer, Charles P. and Galicki, Krzysztof and Mann, Benjamin M.},
	Date-Added = {2019-04-26 12:35:59 -0400},
	Date-Modified = {2019-04-26 12:36:16 -0400},
	Doi = {10.4310/CAG.1993.v1.n2.a3},
	Fjournal = {Communications in Analysis and Geometry},
	Issn = {1019-8385},
	Journal = {Comm. Anal. Geom.},
	Mrclass = {53C25 (32L25)},
	Mrnumber = {1243524},
	Mrreviewer = {Henrik Pedersen},
	Number = {2},
	Pages = {229--279},
	Title = {Quaternionic reduction and {E}instein manifolds},
	Url = {https://doi.org/10.4310/CAG.1993.v1.n2.a3},
	Volume = {1},
	Year = {1993},
	Bdsk-Url-1 = {https://doi.org/10.4310/CAG.1993.v1.n2.a3}}

@article{deformg2,
	Author = {Alexandrov, Bogdan and Semmelmann, Uwe},
	Date-Added = {2019-04-25 14:18:44 -0400},
	Date-Modified = {2019-04-25 14:19:04 -0400},
	Doi = {10.4310/AJM.2012.v16.n4.a6},
	Fjournal = {Asian Journal of Mathematics},
	Issn = {1093-6106},
	Journal = {Asian J. Math.},
	Mrclass = {53C25 (58H15)},
	Mrnumber = {3004283},
	Number = {4},
	Pages = {713--744},
	Title = {Deformations of nearly parallel {${\rm G}_2$}-structures},
	Url = {https://doi.org/10.4310/AJM.2012.v16.n4.a6},
	Volume = {16},
	Year = {2012},
	Bdsk-Url-1 = {https://doi.org/10.4310/AJM.2012.v16.n4.a6}}

@article {gonball,
    AUTHOR = {Ball, Gavin and Oliveira, Goncalo},
     TITLE = {Gauge theory on {A}loff--{W}allach spaces},
   JOURNAL = {Geom. Topol.},
  FJOURNAL = {Geometry \& Topology},
    VOLUME = {23},
      YEAR = {2019},
    NUMBER = {2},
     PAGES = {685--743},
      ISSN = {1465-3060},
   MRCLASS = {53C29 (53C07 53C38 57R57)},
  MRNUMBER = {3939051},
MRREVIEWER = {Jason Dean Lotay},
       DOI = {10.2140/gt.2019.23.685},
       URL = {https://doi-org.proxy.lib.uwaterloo.ca/10.2140/gt.2019.23.685},
}

@article{Gray1971,
	Author = {Gray, Alfred},
	Date-Added = {2019-03-04 18:55:01 -0500},
	Date-Modified = {2019-03-04 18:55:01 -0500},
	Day = {01},
	Doi = {10.1007/BF01109983},
	Issn = {1432-1823},
	Journal = {Mathematische Zeitschrift},
	Number = {4},
	Pages = {290--300},
	Title = {Weak holonomy groups},
	Url = {https://doi.org/10.1007/BF01109983},
	Volume = {123},
	Year = {1971},
	Bdsk-Url-1 = {https://doi.org/10.1007/BF01109983}}

@inproceedings{bryantrmks,
	Author = {Bryant, Robert L.},
	Booktitle = {Proceedings of {G}\"{o}kova {G}eometry-{T}opology {C}onference 2005},
	Date-Added = {2019-03-03 16:09:28 -0500},
	Date-Modified = {2019-03-03 16:09:41 -0500},
	Mrclass = {53C10 (53C29)},
	Mrnumber = {2282011},
	Mrreviewer = {Simon G. Chiossi},
	Pages = {75--109},
	Publisher = {G\"{o}kova Geometry/Topology Conference (GGT), G\"{o}kova},
	Title = {Some remarks on {$G_2$}-structures},
	Year = {2006}}

@incollection {Hitchin-stableforms,
    AUTHOR = {Hitchin, Nigel},
     TITLE = {Stable forms and special metrics},
 BOOKTITLE = {Global differential geometry: the mathematical legacy of
              {A}lfred {G}ray ({B}ilbao, 2000)},
    SERIES = {Contemp. Math.},
    VOLUME = {288},
     PAGES = {70--89},
 PUBLISHER = {Amer. Math. Soc., Providence, RI},
      YEAR = {2001},
   MRCLASS = {53C25 (53C10 53C29 58A10 58E11)},
  MRNUMBER = {1871001},
MRREVIEWER = {Santiago R. Simanca},
       DOI = {10.1090/conm/288/04818},
       URL = {https://doi.org/10.1090/conm/288/04818},
}

@article {Ric-SU3,
    AUTHOR = {Bedulli, Lucio and Vezzoni, Luigi},
     TITLE = {The {R}icci tensor of {SU}(3)-manifolds},
   JOURNAL = {J. Geom. Phys.},
  FJOURNAL = {Journal of Geometry and Physics},
    VOLUME = {57},
      YEAR = {2007},
    NUMBER = {4},
     PAGES = {1125--1146},
      ISSN = {0393-0440},
   MRCLASS = {53C10 (53C29)},
  MRNUMBER = {2287296},
MRREVIEWER = {Tedi C. Draghici},
       DOI = {10.1016/j.geomphys.2006.09.007},
       URL = {https://doi.org/10.1016/j.geomphys.2006.09.007},
}

@article {Haskins-Lorenzo,
    AUTHOR = {Foscolo, Lorenzo and Haskins, Mark},
     TITLE = {New {$G_2$}-holonomy cones and exotic nearly {K}\"{a}hler
              structures on {$S^6$} and {$S^3\times S^3$}},
   JOURNAL = {Ann. of Math. (2)},
  FJOURNAL = {Annals of Mathematics. Second Series},
    VOLUME = {185},
      YEAR = {2017},
    NUMBER = {1},
     PAGES = {59--130},
      ISSN = {0003-486X},
   MRCLASS = {53C29 (53C15)},
  MRNUMBER = {3583352},
MRREVIEWER = {Goncalo Oliveira},
       DOI = {10.4007/annals.2017.185.1.2},
       URL = {https://doi.org/10.4007/annals.2017.185.1.2},
}

@article{AgrFried,
	Abstract = {We investigate the holonomy group of a linear metric connection with skew-symmetric torsion. In case of the euclidian space and a constant torsion form this group is always semisimple. It does not preserve any non-degenerated 2-form or any spinor. Suitable integral formulas allow us to prove similar properties in case of a compact Riemannian manifold equipped with a metric connection of skew-symmetric torsion. On the Aloff--Wallach space N(1,1) we construct families of connections admitting parallel spinors. Furthermore, we investigate the geometry of these connections as well as the geometry of the underlying Riemannian metric. Finally, we prove that any 7-dimensional 3-Sasakian manifold admits 2-parameter families of linear metric connections and spinorial connections defined by 4-forms with parallel spinors.},
	Author = {Agricola, Ilka and Friedrich, Thomas},
	Date-Added = {2018-10-02 13:26:10 -0400},
	Date-Modified = {2018-10-02 13:26:44 -0400},
	Day = {01},
	Doi = {10.1007/s00208-003-0507-9},
	Issn = {1432-1807},
	Journal = {Mathematische Annalen},
	Number = {4},
	Pages = {711--748},
	Title = {On the holonomy of connections with skew-symmetric torsion},
	Url = {https://doi.org/10.1007/s00208-003-0507-9},
	Volume = {328},
	Year = {2004},
	Bdsk-Url-1 = {https://doi.org/10.1007/s00208-003-0507-9}}

@article{friedkath,
	Author = {Thomas Friedrich and Ines Kath and Andrei Moroianu and Uwe Semmelmann},
	Date-Added = {2018-09-29 17:21:41 -0400},
	Date-Modified = {2019-03-25 15:05:21 -0400},
	Doi = {https://doi.org/10.1016/S0393-0440(97)80004-6},
	Issn = {0393-0440},
	Journal = {Journal of Geometry and Physics},
	Keywords = {Differential geometry, -structure, Sasakian manifold, Killing spinor},
	Number = {3},
	Pages = {259 - 286},
	Title = {On nearly parallel $\mathrm{G}_2$-structures},
	Url = {http://www.sciencedirect.com/science/article/pii/S0393044097800046},
	Volume = {23},
	Year = {1997},
	Bdsk-Url-1 = {http://www.sciencedirect.com/science/article/pii/S0393044097800046},
	Bdsk-Url-2 = {https://doi.org/10.1016/S0393-0440(97)80004-6}}

@article {Madsen-Salamon,
    AUTHOR = {Madsen, Thomas Bruun and Salamon, Simon},
     TITLE = {Half-flat structures on {$S^3\times S^3$}},
   JOURNAL = {Ann. Global Anal. Geom.},
  FJOURNAL = {Annals of Global Analysis and Geometry},
    VOLUME = {44},
      YEAR = {2013},
    NUMBER = {4},
     PAGES = {369--390},
      ISSN = {0232-704X},
   MRCLASS = {53C25 (53C29 53C44 53D20)},
  MRNUMBER = {3132080},
MRREVIEWER = {Dmitri\u{\i} Vladimir Alekseevsky},
       DOI = {10.1007/s10455-013-9371-3},
       URL = {https://doi.org/10.1007/s10455-013-9371-3},
}

@article{skflow,
	Author = {Karigiannis, Spiro},
	Date-Added = {2018-09-28 18:23:34 -0400},
	Date-Modified = {2018-09-28 18:23:56 -0400},
	Doi = {10.1093/qmath/han020},
	Fjournal = {The Quarterly Journal of Mathematics},
	Issn = {0033-5606},
	Journal = {Q. J. Math.},
	Mrclass = {53C10 (53C29 53C44)},
	Mrnumber = {2559631},
	Mrreviewer = {Andrew Swann},
	Number = {4},
	Pages = {487--522},
	Title = {Flows of {$G_2$}-structures. {I}},
	Url = {https://doi.org/10.1093/qmath/han020},
	Volume = {60},
	Year = {2009},
	Bdsk-Url-1 = {https://doi.org/10.1093/qmath/han020}}

\vspace{0.8cm}

\noindent 
Department of Mathematics, Unversit\'e Libre de Bruxelles,  Boulevard du Triomphe 155. B-1050 Bruxelles.\\
\emph{E-mail address} : \href{mailto:ragini.singhal@ulb.be}{ragini.singhal@ulb.be}, \href{mailto:raginisinghal1016@gmail.com}{raginisinghal1016@gmail.com}
\end{document}